\documentclass[11pt,a4paper]{article}

\usepackage[T1]{fontenc}
\usepackage[utf8]{inputenc}
\usepackage[english]{babel}
\usepackage{makecell}
\usepackage[a4paper,margin=1in]{geometry}
\usepackage{setspace}
\usepackage{parskip}

\usepackage{amsmath,amssymb,amsfonts,amsthm,mathtools}
\usepackage{bm}

\usepackage{graphicx}
\usepackage{float}
\usepackage{subcaption}
\usepackage{booktabs}
\usepackage{multirow}
\usepackage{array}
\usepackage{tabularx}

\usepackage{algorithm}
\usepackage{algpseudocode}

\usepackage[numbers,sort&compress]{natbib}
\usepackage[colorlinks=true,linkcolor=blue,citecolor=blue,urlcolor=blue]{hyperref}
\usepackage[nameinlink,capitalise,noabbrev]{cleveref}

\usepackage{xcolor}
\usepackage{orcidlink}
\usepackage{authblk}

\newtheorem{theorem}{Theorem}[section]
\newtheorem{lemma}[theorem]{Lemma}

\theoremstyle{definition}
\newtheorem{definition}[theorem]{Definition}

\title{\textbf{Robust Chance-Constrained Complex Zero-Sum Games}}

\author[1]{Raneem Madani\,\orcidlink{0009-0006-6757-0181}}
\author[1,2]{Abdel Lisser\,\orcidlink{0000-0003-1318-6679}}
\author[1]{Zeno Toffano\,\orcidlink{0000-0001-8594-3291}}

\affil[1]{Laboratoire des Signaux et Syst\`emes (L2S), CNRS, CentraleSup\'elec, Universit\'e Paris-Saclay, 91190 Gif-sur-Yvette, France}
\affil[2]{F\'ed\'eration des Math\'ematiques de CentraleSup\'elec, Universit\'e Paris-Saclay, 91190 Gif-sur-Yvette, France}

\date{April 2026}

\begin{document}

\maketitle

\begin{center}
Raneem Madani: \texttt{raneem.madani@centralesupelec.fr} \\
Abdel Lisser: \texttt{abdel.lisser@centralesupelec.fr} \\
Zeno Toffano: \texttt{zeno.toffano@centralesupelec.fr}
\end{center}

\begin{abstract}
This paper develops a unified framework for zero-sum games in which both the pure strategies and the payoff matrices contain complex-valued entries. By leveraging a linear isomorphism between complex and real vector spaces, we extend key results from real-valued convex analysis to the complex domain, establishing the validity of the minimax theorem and the preservation of saddle-point structure. Building on this foundation, we formulate a complex zero-sum game model that enables mixed strategies to interact with the real and imaginary components of the payoff matrix, and we characterize its saddle-point equilibrium through associated primal and dual problems. To incorporate uncertainty, we introduce a complex chance-constrained zero-sum game model that handles individual probabilistic constraints defined by complex linear functionals. We first study this formulation under known exact distributions, focusing on complex elliptically symmetric random variables, which generalize the complex Gaussian family. The framework is then extended to moment-based ambiguity sets, including: (i) distributions with known first two moments, (ii) distributions with unknown second-order moments, and (iii) distributions with fully unknown moments. In all cases, the probabilistic constraints admit deterministic second-order cone representations, ensuring convex feasible strategy sets and enabling an explicit characterization of the complex game value. Numerical experiments, including a transmitter--jammer waveform interaction model, show how the proposed framework captures the behavior of complex mixed strategies. We also evaluate out-of-sample violation rates and confirm that the empirical behavior is consistent with the theoretical guarantees.
\end{abstract}
\noindent\textbf{Keywords:} Zero-sum games; Complex payoff matrices; Complex random variables; Saddle-point equilibrium; Chance-constrained optimization; Complex elliptically symmetric distributions; Complex linear programs; Distributionally robust optimization; Second-order cone programming.

\section{Introduction}\label{sec:1}
Optimization problems with complex variables are increasingly important because they underpin a wide range of real-world applications, including signal processing \cite{10452289,10.1007/978-3-030-27192-3_1}, quantum information and communication \cite{PhysRevA.109.012609}, and chemistry \cite{zheng2024unleashed}. A substantial body of work has extended classical optimization theory to the complex domain. Early contributions include formulations of complex linear programming \cite{LEVINSON196644}, followed by advances in complex nonlinear programming \cite{ABRAMS1972619, FERRERO1992399} and duality theory \cite{Craven_Mond_1973, DATTA19841}. Later developments, such as Cauchy–Riemann (CR) calculus \cite{kreutz2009complex} and algorithms for unconstrained complex optimization \cite{doi:10.1137/110832124}, further strengthened this theoretical and algorithmic foundation. As practical optimization problems increasingly incorporated uncertainty, stochastic optimization emerged \cite{dantzig1955linear}. In \cite{charnes1959chance}, Charnes and Cooper introduced chance-constrained programming (CCP), which enforces constraints probabilistically to guarantee feasibility at a prescribed confidence level. This paradigm remains a standard tool for balancing feasibility and risk in optimization under uncertainty \cite{CHENG2012325}. More recently, in \cite{10.1007/978-3-032-13589-6_21}, we investigated complex chance-constrained programming (3CP), focusing on linear 3CP under complex normal distributions.

In stochastic CCP, two main settings are typically considered. The first assumes that the distribution is known exactly, for instance, when it follows a complex elliptically symmetric (CES) distribution. The latter form a broad class of complex-valued probabilistic models that generalize the complex normal distribution by preserving its elliptical geometry while allowing both heavier and lighter tails \cite{1502990,6263313}. They arise in various engineering applications \cite{1091566, conte1991modelling} and include important special cases such as the complex Gaussian distribution \cite{goodman1963statistical}, the complex Student-$t$ distribution \cite{krishnaiah1986complex}, and the generalized Gaussian distribution \cite{novey2009complex}. The second setting arises when the exact probability distribution of the uncertainty is unknown. In this case, {Distributionally Robust Optimization (DRO)} extends stochastic optimization by relying only on partial statistical information about the uncertainty \cite{calafiore2006distributionally}. Instead of assuming a fixed distribution, DRO seeks solutions that perform well under the worst-case distribution within an {ambiguity set}, a family of distributions characterized by available information such as moments, support, or distance metrics \cite{rahimian2022frameworks, nguyen2024distributionally, doi:10.1137/130915315}.

Zero-sum games are a cornerstone of game theory, modeling competitive interactions in which one player’s gain is exactly the other’s loss. Early foundations were laid by \cite{Neumann1928}, whose minimax theorem established the existence of equilibrium in real-valued strategy spaces. This was later generalized by \cite{1103040253}, who showed that a saddle-point equilibrium exists for convex–concave functions over compact convex strategy sets. In parallel, \cite{nash1950equilibrium} demonstrated that every finite strategic game admits a Nash equilibrium, formalizing the modern equilibrium concept. These contributions exposed a deep connection between game theory and optimization: saddle-point equilibria of zero-sum games correspond to optimal solutions of primal–dual linear programs \cite{dantzig1951proof}. Initially, the literature focused on unconstrained games, but \cite{charnes1953constrained} extended this framework to constrained zero-sum games, proving that equilibrium can still be computed via primal–dual formulations when linear inequalities restrict players’ strategies. Randomness in payoffs and constraints was introduced by \cite{blau1974random} through the notion of random-payoff games, which later evolved into chance-constrained game models \cite{CHENG2016213, 10269785}. A substantial body of work followed, including characterizations of Nash equilibria under random payoffs \cite{riccardi2023complementarity,SINGH2016640,singh2018characterization, zhang2022variational}, joint chance constraints in general-sum games \cite{PENG2018482}, and extensions to mixture distributions \cite{peng2021chance}. Further developments incorporated dependent and elliptically distributed uncertainties \cite{nguyen2022random}, distributionally robust formulations under moment-based and distance-based ambiguity sets \cite{singh2017distributionally, SINGH2021109092}, and even Wasserstein-ball ambiguity sets \cite{XIA2023315}. 

In contrast, zero-sum games defined directly in complex spaces remain far less explored. A naive real-valued reformulation obtained by splitting variables into real and imaginary parts is often unsatisfactory, since only the real part admits a clear probabilistic interpretation, while the imaginary part is merely an auxiliary degree of freedom. Early work~\cite{mond1982game, murray1983solution, mond1983minimax} instead developed minimax results natively in the complex domain by introducing {complex probability vectors}: their real parts form standard nonnegative weights summing to one, and their imaginary parts are restricted to pointed polyhedral cones. In this framework, so-called {pure strategies} are not Dirac vectors but extreme feasible complex strategy vectors, and mixed strategies are obtained by taking real convex combinations of these extremes, effectively a two-level representation (real probabilities over complex probability vectors). This reduction enables solving an equivalent real matrix game via linear programming and mapping the solution back to the complex space. However, the approach ties the feasible mixed-strategy set to the particular cone generated from the chosen pure vectors, and it doesn't yield an explicit primal-dual saddle-point characterization.

This paper fills these gaps by developing a complete theoretical and algorithmic framework for complex zero-sum games under uncertainty. We begin by extending fundamental tools from convex analysis to the complex domain through a real–complex linear isomorphism, ensuring that minimax principles and saddle-point preservation carry over directly from real-valued settings.

We introduce norm-bounded complex strategy sets in which the real part forms a valid probability distribution and the imaginary part is intrinsically coupled through a norm constraint. This construction yields compact, convex feasible sets and eliminates the unboundedness issues present in prior models. Within this setting, we incorporate linear constraints and prove the existence of saddle-point equilibria, showing that primal–dual characterizations remain valid in the complex domain. To handle uncertainty, we integrate this framework with 3CP. We study several ambiguity sets of uncertainty: (i) random payoffs following Complex Elliptically Symmetric (CES) distributions; (ii) distributionally robust settings in which the first and second moments are known; (iii) ambiguity set with unknown second order moment, which accounts for the information about a mean vector and an upper bound on the second moments matrices  (iv) ambiguity set with unknown moments, where the mean vector lies in an ellipsoid and the second moment lies in a positive semi-definite cone. In each case, we derive convex deterministic reformulations, typically SOCP representations, that preserve feasibility, convexity, and tractability. This allows us to establish both the existence and the explicit primal–dual SOCP characterization of saddle-point equilibria, and to certify equilibrium strategies via their associated dual variables. To connect the theoretical results with practice, we include a numerical study that highlights how complex-valued mixed strategies behave under the proposed feasible-set construction and chance-constrained modeling. In particular, we use a transmitter–jammer waveform interaction model and report sensitivity analysis to assess empirical calibration against the target confidence levels.

The paper is structured as follows. Section~\ref{sec:2} introduces the notations and presents the main preliminaries required throughout the paper. Section~\ref{sec:3} develops the zero-sum game with a complex-valued payoff matrix under linear constraints and establishes the existence and characterization of the saddle-point equilibrium via the primal–dual formulation. Section~\ref{sec:4} extends the framework to zero-sum games with random constraint parameters, leading to complex chance-constrained formulations. We analyze multiple scenarios and prove both the existence and the characterization of the resulting saddle points. Section~\ref{sec:5} reports numerical experiments illustrating the proposed results. Finally, Section~\ref{sec:6} concludes the paper.
\section{Notations and Definitions}\label{sec:2}
\begin{table}
\caption{Summary of notation used throughout the paper}
\renewcommand{\arraystretch}{1.25}
\begin{tabular}{|l|l|}
\hline
\textbf{Symbol} & \textbf{Meaning} \\ \hline
$a = a_R + i a_I \in \mathbb{C}^n$   
& Complex vector \\ \hline
$\operatorname{Re}(a) = a_R,\operatorname{Im}(a) = a_I$       
& Real and imaginary parts of $a$ \\ \hline
$\varphi_1(a)=[a_R ~ a_I]^T$ 
& The isomorphism map of the complex vector from $\mathbb{C}^n\mapsto\mathbb{R}^{2n}$\\ \hline
$\bar{a}, a^T, a^H$                           
& Complex conjugate, Transpose and Hermitian (conjugate) transpose of $a$ \\ \hline
$A \in \mathbb{C}^{n \times m}$     
& Payoff matrix \\ \hline
$\varphi_2(A)=\begin{pmatrix} A_R &-A_I\\A_I&A_R\end{pmatrix}$ 
& The isomorphism map of complex matrix from $\mathbb{C}^{n\times m}\mapsto\mathbb{R}^{2n\times2m}$\\ \hline
$u \in U \subset \mathbb{C}^n,v\in V \subset \mathbb{C}^m$ 
& Mixed-strategy sets of player~1 and player~2 \\ \hline
$\alpha_1,\ \alpha_2$ 
& Norm-bound parameters of the mixed strategies of player~1 and player~2 \\ \hline
$B \in \mathbb{C}^{l \times n},\ b \in \mathbb{C}^l$ 
& Constraint parameters of player~1 \\ \hline
$D \in \mathbb{C}^{q \times m},\ d \in \mathbb{C}^q$ 
& Constraint parameters of player~2 \\ \hline
$S_1,\ S_2$ 
& Feasible sets of player~1 and player~2 \\ \hline
$\mu_B,\ \Gamma_B,\ J_B$ 
& Mean, covariance, and pseudo-covariance of random variable $B$ \\ \hline
$p_1,\ p_2$ 
& Probability vectors of the chance constraints for both players \\ \hline
$\Phi^{-1}(\cdot)$ 
& Quantile function (inverse cumulative distribution) \\ \hline
$\lambda_1,\beta_1, \rho_1,r_1$ 
& Lagrange multipliers of the primal problem \\ \hline
$\lambda_2,\beta_2, \rho_2,r_2$ 
& Lagrange multipliers of the dual problem \\ \hline
\end{tabular}
\label{tab:notation}
\end{table}
In this section, we introduce the notations, definitions, and mathematical tools used throughout the paper. We begin by presenting Table~\eqref{tab:notation}, which summarizes all notations employed in the sequel. We then review fundamental concepts related to complex variables and complex random variables. Next, we develop a linear isomorphism that maps complex spaces to their real representations, enabling us to extend several classical results to the complex domain. Building on this isomorphism, we present key convex-analytic tools for complex spaces and use them to establish a minimax theorem, prove saddle-point preservation, and derive additional constructions such as the dual norm in the complex setting. The set of complex vectors is denoted by $\mathbb{C}^n$. A complex number can be written as $a=(a_R,a_I)=a_R+ia_I$, where $a_R=\operatorname{Re}(a)$, $a_I=\operatorname{Im}(z)$ are the real and imaginary part of $a$ respectively \cite{ahlfors1979complex}. The complex conjugate of $a$ is defined as $\bar{a}=(a_R,-a_I)=a_R-ia_I$. The transpose and the complex conjugate transpose of $a$ are denoted by $a^T$ and $a^H$, respectively. The Euclidean norm of $a$ is equal to \[\|a\|=\sqrt{aa^H}=\sqrt{a_R^2+a_I^2}.\]
In probability theory and statistics, a complex random vector is typically a tuple of complex-valued random variables, and generally is a random variable taking values in a vector space over the field of complex numbers. Complex random variables can always be considered as pairs of real random vectors: their real and imaginary parts. A complex random vector $a\in\mathbb{C}^n$ on the probability space $(\Omega, \mathcal{F}, P)$ is a mapping $a: \Omega \rightarrow \mathbb{C}^{n}$ such that the vector $[\operatorname{Re}(a_{1}), \operatorname{Im}(a_{1})]^{T}$ is a real random vector on $(\Omega, \mathcal{F}, P)$. The random vector has a mean, a covariance matrix, and a pseudo-covariance matrix defined as follows \cite {1179767}:
\begin{definition}\label{Def: 1} The mean, covariance matrix, and pseudo-covariance matrix of the random vector $a$ are given by:
\begin{align}
    & \mu_a = \mathbb{E}[a] = \mathbb{E}[a_R]+ i\mathbb{E}[a_I]\\
    &\Gamma_a = Cov(a,a)=\mathbb{E}\left[(a-\mathbb{E}[a])(a-\mathbb{E}[a])^H\right]= \Gamma_{a_R} + \Gamma_{a_I} +i(\Gamma_{a_Ia_R}-\Gamma_{a_Ra_I})\\
    & J_a = Cov(a,\bar{a})= \mathbb{E}\left[(a-\mathbb{E}[a])(a-\mathbb{E}[a])^T\right] = \Gamma_{a_R}- \Gamma_{a_I}+ i(\Gamma_{a_Ia_R} + \Gamma_{a_Ra_I}) 
\end{align}
\end{definition}
The covariance matrix $\Gamma_a$ is Hermitian positive semidefinite $\Gamma_a^H=\Gamma_a$, and the pseudo-covariance matrix $J_a$ is symmetric $J_a^T=J_a$. Also $ \forall\alpha\in\mathbb{C}, Cov(a,b)=\overline{Cov(b,a)}$, $Cov(\alpha a,b)=\alpha Cov(a,b)$, and $Cov(a,\alpha b) = \bar{\alpha}Cov(a,b)$.

\begin{definition}[Proper complex random vectors \cite{1179767}]
    A complex random vector $a\in \mathbb{C}^n$ is called {proper} if the following three conditions are satisfied:
\begin{enumerate}
    \item Zero mean: $\mathbb{E}[a] = 0$,
    \item Finite variance for all components,
    \item Vanishing pseudo-covariance: $\mathbb{E}[aa^{T}] = 0$.
\end{enumerate}
\end{definition}
The following two linear mappings provide a bridge between complex and real spaces:
\begin{enumerate}
    \item For all $a \in \mathbb{C}^n$, define
    \begin{align}
        \varphi_1(a) &=
        \begin{pmatrix}
        {Re}(a) \\
        {Im}(a)
        \end{pmatrix}
        \in \mathbb{R}^{2n}.
        \label{eq:phi1}
    \end{align}
    \item For all $A \in \mathbb{C}^{n \times m}$, define
    \begin{align}
        \varphi_2(A) &=
        \begin{pmatrix}
        {Re}(A) & -{Im}(A) \\
        {Im}(A) & {Re}(A)
        \end{pmatrix}
        \in \mathbb{R}^{2n \times 2m}.
        \label{eq:phi2}
    \end{align}
\end{enumerate}
These mappings are one-to-one and preserve the algebraic structure of addition, scalar multiplication, matrix multiplication, transposition, and inversion. Furthermore, in \cite{ZHANG201559}, the authors proved the following useful properties and lemmas. For any
\( A \in \mathbb{C}^{n \times m} \), \( B \in \mathbb{C}^{n \times k} \),
\( D \in \mathbb{C}^{n \times n} \) where \( D \) is invertible,
\( a, b \in \mathbb{C}^n \), \( \sigma \in \mathbb{R} \), and \( I_n \in \mathbb{R}^{n \times n} \) is the identity matrix, we have:
\begin{enumerate}
    \item $\varphi_1(a \pm b) = \varphi_1(a) \pm \varphi_1(b), \quad \varphi_1(\sigma a) = \sigma \varphi_1(a),$
    \item $\varphi_2(A \pm B) = \varphi_2(A) \pm \varphi_2(B), \quad \varphi_2(\sigma A) = \sigma \varphi_2(A), \quad \varphi_2(I_n) = I_{2n},$
    \item $\varphi_2(AB) = \varphi_2(A)\varphi_2(B), \quad \varphi_1(Aa) = \varphi_2(A)\varphi_1(a),$
    \item $\varphi_2(A^H) = \left( \varphi_2(A) \right)^T, \quad \varphi_2(D^{-1}) = \left( \varphi_2(D) \right)^{-1},$
    \item $\|a\|^2 = \|\varphi_1(a)\|^2, \quad {Re}(a^H b) = \varphi_1(a)^T \varphi_1(b).$
\end{enumerate}
\begin{lemma}[\cite{ZHANG201559}]\label{lemm:convex-image}
If $U \subset \mathbb{C}^n$ is convex, then its image $\tilde{U} := \{\varphi_1(u) : u \in U\} \subset \mathbb{R}^{2n}$ is also convex.
\end{lemma}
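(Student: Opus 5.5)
The plan is to verify convexity of $\tilde U$ directly from the definition, using only that $\varphi_1$ is $\mathbb{R}$-linear (property~1 in the list above). Convexity in $\mathbb{C}^n$ means convexity with respect to real convex combinations: for $u,w\in U$ and $\theta\in[0,1]$ we have $\theta u+(1-\theta)w\in U$. So the whole proof reduces to transporting real convex combinations through $\varphi_1$.

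Concretely, I would take arbitrary $x,y\in\tilde U$ and $\theta\in[0,1]$. By definition of $\tilde U$, there exist $u,w\in U$ with $x=\varphi_1(u)$ and $y=\varphi_1(w)$. Applying property~1 twice, first with $\sigma=\theta$ and $\sigma=1-\theta$ (both real), then additivity, gives
\[
\theta x+(1-\theta)y=\theta\varphi_1(u)+(1-\theta)\varphi_1(w)=\varphi_1\bigl(\theta u+(1-\theta)w\bigr).
\]
Since $U$ is convex, $\theta u+(1-\theta)w\in U$. Therefore $\theta x+(1-\theta)y\in\tilde U$, and $\tilde U$ is convex.

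The only point needing care is that the scalars $\theta,1-\theta$ are real. This matters because $\varphi_1$ is linear over $\mathbb{R}$ but not over $\mathbb{C}$; for example, $\varphi_1(ia)\neq i\varphi_1(a)$ in any meaningful sense. Since convexity involves only real weights, this causes no difficulty. I would also note that the converse holds by the same argument using $\varphi_1^{-1}$, which is likewise $\mathbb{R}$-linear because $\varphi_1$ is a bijection onto $\mathbb{R}^{2n}$. That converse is not needed here.
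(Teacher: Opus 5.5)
Your argument is correct and complete: $\varphi_1$ is $\mathbb{R}$-linear and convexity uses only real weights $\theta\in[0,1]$, so $\theta\varphi_1(u)+(1-\theta)\varphi_1(w)=\varphi_1(\theta u+(1-\theta)w)\in\tilde U$. The paper gives no proof of its own for this lemma and simply cites \cite{ZHANG201559}; your real-linearity argument is the standard justification for that cited fact, so there is nothing to add.
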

\begin{lemma}[\cite{ZHANG201559}]\label{lemm:convexity-preservation}
Let $g:\mathbb{C}^n \to \mathbb{R}$ and define $f(w) := g(\varphi_1^{-1}(w)).$ If $g$ is convex (respectively concave) on a convex set $U \subset \mathbb{C}^n$, 
then $f$ is convex (respectively concave) on $\tilde{U}$.
\end{lemma}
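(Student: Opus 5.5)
The argument is a direct transfer of convexity through the real-linear bijection $\varphi_1$. Here convexity of $g$ on a complex set means the usual inequality with \emph{real} weights: $g(\theta u+(1-\theta)u')\le \theta g(u)+(1-\theta)g(u')$ for all $u,u'\in U$ and $\theta\in[0,1]$. This is the only sensible notion for a real-valued function on a complex vector space. The plan is to pull two arbitrary points of $\tilde U$ back to $U$, apply the convexity of $g$ there, and push the result forward again.

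First, I record that $f$ is well defined on $\tilde U$. The map $\varphi_1:\mathbb{C}^n\to\mathbb{R}^{2n}$ is a bijection, with explicit inverse $\varphi_1^{-1}(x,y)=x+iy$. By property 1 in the list above, this inverse is again real-linear:
\[
\varphi_1^{-1}(\theta w+(1-\theta)w')=\theta\,\varphi_1^{-1}(w)+(1-\theta)\,\varphi_1^{-1}(w') \quad\text{for } \theta\in\mathbb{R}.
\]
To verify this, apply $\varphi_1$ to the right-hand side and use additivity and real homogeneity.

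Second, I fix $w,w'\in\tilde U$ and $\theta\in[0,1]$, and set $u=\varphi_1^{-1}(w)$ and $u'=\varphi_1^{-1}(w')$, both of which lie in $U$. By Lemma~\ref{lemm:convex-image}, $\theta w+(1-\theta)w'\in\tilde U$, so $f$ is defined at this point. Equivalently, $\theta u+(1-\theta)u'\in U$ by convexity of $U$, and its image under $\varphi_1$ is the convex combination of $w$ and $w'$. Real-linearity of $\varphi_1^{-1}$ then gives
\[
f(\theta w+(1-\theta)w')=g(\theta u+(1-\theta)u')\le \theta g(u)+(1-\theta)g(u')=\theta f(w)+(1-\theta)f(w').
\]
The concave case follows identically with the inequality reversed, or by applying the convex case to $-g$.

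There is no substantial obstacle. The only point requiring care is the first step: the convexity notion on $\mathbb{C}^n$ must use real convex combinations, and $\varphi_1^{-1}$ must commute with exactly those combinations. This holds because $\varphi_1$ is $\mathbb{R}$-linear, although it is not $\mathbb{C}$-linear. Once that is stated explicitly, the rest is a one-line computation.
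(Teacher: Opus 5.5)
Your proof is correct. Pulling two points of $\tilde U$ back through the $\mathbb{R}$-linear bijection $\varphi_1$, applying convexity of $g$ with real weights, and pushing the inequality forward is all that is needed. The paper gives no proof of its own and simply cites \cite{ZHANG201559}, so your short argument supplies the details the paper leaves to that reference.
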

\begin{lemma}[\cite{arkhangel1990basic}]\label{lemm:compact-image}
If $U \subset \mathbb{C}^n$ is compact, then its image $\tilde U := \{\varphi_1(u):u\in U\}$ is also compact.
\end{lemma}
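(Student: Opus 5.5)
The key observation is that $\varphi_1:\mathbb{C}^n\to\mathbb{R}^{2n}$ is a homeomorphism once both spaces carry their Euclidean topologies. The statement then follows from the fact that continuous images of compact sets are compact.

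\textbf{Step 1: $\varphi_1$ is an isometry.} By property~5 of the list above, $\|a\|^2=\|\varphi_1(a)\|^2$ for every $a\in\mathbb{C}^n$. Combined with linearity (property~1), this gives
\begin{equation*}
\|\varphi_1(a)-\varphi_1(b)\| = \|\varphi_1(a-b)\| = \|a-b\|, \qquad a,b\in\mathbb{C}^n .
\end{equation*}
So $\varphi_1$ is an isometry, and in particular it is continuous (indeed $1$-Lipschitz).

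\textbf{Step 2: image of a compact set.} I would give the sequential argument directly, since it avoids any appeal to open covers. Let $(w_k)$ be a sequence in $\tilde U$, and write $w_k=\varphi_1(u_k)$ with $u_k\in U$. Compactness of $U$ gives a subsequence $u_{k_j}\to u\in U$. By Step~1,
\begin{equation*}
\|w_{k_j}-\varphi_1(u)\| = \|u_{k_j}-u\| \to 0 ,
\end{equation*}
and $\varphi_1(u)\in\tilde U$. Hence $\tilde U$ is sequentially compact, and therefore compact, since $\mathbb{R}^{2n}$ is a metric space.

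\textbf{Alternative via Heine--Borel.} One can instead argue that $\tilde U$ is bounded and closed. Boundedness holds because $\|\varphi_1(u)\|=\|u\|$. Closedness holds because $\tilde U=(\varphi_1^{-1})^{-1}(U)$ is the preimage of the closed set $U$ under the continuous map $\varphi_1^{-1}$; this map is also isometric, and $\varphi_1$ is a bijection.

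\textbf{Main obstacle.} There is no real difficulty here. The only point needing care is that the topology on $\mathbb{C}^n$ is the one induced by the norm $\|a\|$ defined earlier. That identification is exactly what Step~1 establishes.
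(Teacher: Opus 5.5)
Your proof is correct and follows the route the paper relies on. The paper gives no argument of its own; it cites a basic topology text for the fact that continuous images of compact sets are compact, and your isometry identity $\|\varphi_1(a)-\varphi_1(b)\|=\|a-b\|$ together with the sequential-compactness argument (or the Heine--Borel alternative) is a self-contained version of that fact.
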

We now state the following lemma, in which we establish the saddle-point preservation and the minimax theorem in the complex domain.
\begin{lemma}[Saddle-Point Preservation]
\label{lem:saddle-preserve}
Let $S_1\subset \mathbb{C}^n, S_2\subset\mathbb{C}^m,$ and let $g:S_1\times S_2\to\mathbb{R}$, define $f(\tilde u,\tilde v) 
    := g(\varphi_1^{-1}(\tilde u), \varphi_1^{-1}(\tilde v)).$ Then $(u^\star,v^\star)\in S_1\times S_2$ is a saddle point of $g$ iff $(\varphi_1(u^\star),\varphi_1(v^\star))$ is a saddle point of $f$.
\end{lemma}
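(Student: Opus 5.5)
The argument is a direct unwinding of definitions, using only that $\varphi_1$ is a bijection. Throughout, take the saddle point of $g$ in the usual real-valued sense: $(u^\star,v^\star)\in S_1\times S_2$ is a saddle point if
\[
g(u,v^\star)\le g(u^\star,v^\star)\le g(u^\star,v)\qquad \text{for all } u\in S_1,\ v\in S_2 .
\]
Saddle points of $f$ are defined in the same way over $\tilde S_1\times\tilde S_2$, where $\tilde S_1:=\{\varphi_1(u):u\in S_1\}\subset\mathbb{R}^{2n}$ and $\tilde S_2:=\{\varphi_1(v):v\in S_2\}\subset\mathbb{R}^{2m}$. These are the only sets on which $f$ is defined.

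First, I record that $\varphi_1$ restricted to $S_1$ is a bijection onto $\tilde S_1$, and likewise for $S_2$ onto $\tilde S_2$. Injectivity holds because $\varphi_1(a)$ determines both $\operatorname{Re}(a)$ and $\operatorname{Im}(a)$; surjectivity holds by the definition of the image. Its inverse is $\varphi_1^{-1}(w_1,w_2)=w_1+iw_2$. The key identity is then
\[
f(\varphi_1(u),\varphi_1(v))=g(u,v)\qquad\text{for all } (u,v)\in S_1\times S_2,
\]
which is immediate from $\varphi_1^{-1}\circ\varphi_1=\mathrm{id}$.

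For the forward direction, assume $(u^\star,v^\star)$ is a saddle point of $g$. Take arbitrary $\tilde u\in\tilde S_1$ and $\tilde v\in\tilde S_2$, and write $\tilde u=\varphi_1(u)$ and $\tilde v=\varphi_1(v)$ with $u\in S_1$, $v\in S_2$, which surjectivity allows. Applying the key identity to each term of the saddle inequalities for $g$ gives
\[
f(\tilde u,\varphi_1(v^\star))\le f(\varphi_1(u^\star),\varphi_1(v^\star))\le f(\varphi_1(u^\star),\tilde v).
\]
Since $\tilde u$ and $\tilde v$ were arbitrary, $(\varphi_1(u^\star),\varphi_1(v^\star))$ is a saddle point of $f$. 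For the converse, start from the saddle inequalities for $f$. Given arbitrary $u\in S_1$ and $v\in S_2$, specialise those inequalities to $\tilde u=\varphi_1(u)$ and $\tilde v=\varphi_1(v)$, then rewrite each term with the key identity to recover the saddle inequalities for $g$.

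There is no real obstacle here. The one point needing care is that the quantifiers range over matching sets, which is exactly what surjectivity of $\varphi_1$ onto the image guarantees. As a remark, the same identity yields $\min\max g=\min\max f$ and $\max\min g=\max\min f$. Combined with Lemmas~\ref{lemm:convex-image}, \ref{lemm:convexity-preservation} and \ref{lemm:compact-image}, this lets the classical Sion or von Neumann minimax theorem transfer to the complex setting.
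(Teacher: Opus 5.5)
Your proposal is correct and follows the same route as the paper: both proofs push the saddle inequalities through the identity $f(\varphi_1(u),\varphi_1(v))=g(u,v)$, and obtain the converse by reversing the argument. Your version is slightly more careful than the paper's, since you state that $f$ is defined on $\tilde S_1\times\tilde S_2$ and use surjectivity of $\varphi_1$ onto these images to match the quantifiers; the paper's proof uses the opposite convention for which player maximizes, which does not affect the argument.
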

\begin{proof}
Suppose $(u^\star,v^\star)$ is a saddle point of $g$, i.e.,
\begin{align}
    g(u^\star,v) \leq g(u^\star,v^\star) \leq g(u,v^\star),
    \quad \forall u \in S_1, v \in S_2.
\end{align}
Let $\tilde u=\varphi_1(u)$, $\tilde v=\varphi_1(v)$. Then
\begin{align}
    f(\varphi_1(u^\star),\tilde v) = g(u^\star,v) \leq g(u^\star,v^\star) =f(\varphi_1(u^\star),\varphi_1(v^\star))\leq g(u,v^\star) = f(\tilde u,\varphi_1(v^\star)).
\end{align}
Hence $(\varphi_1(u^\star),\varphi_1(v^\star))$ is a saddle point of $f$.  
The converse follows by reversing the argument.
\end{proof}
Sion’s minimax theorem \cite{1103040253} extends von Neumann’s classical minimax result \cite{Neumann1928} by relaxing the requirement that strategies lie in simplexes and that the payoff function be bilinear. Instead, the theorem applies to general convex and concave functionals over convex and compact sets. We now present a complex-valued analogue of the minimax theorem in \cite{1103040253}:
\begin{theorem}[Minimax theorem in Complex Space]
Let $U \subseteq \mathbb{C}^n$ and $V \subseteq \mathbb{C}^m$ be compact convex sets. If $g: U \times V \to \mathbb{R}$ is a continuous and concave-convex function, then there exists a saddle point $(u^\star,v^\star)$ and the minimax equality holds:
\begin{align}
    \max_{u \in U} \min_{v \in V} g(u,v)
    &= \min_{v \in V} \max_{u \in U} g(u,v).
    \label{eq:minimax}
\end{align}
Moreover, $(u^\star,v^\star)$ is a saddle point if
\begin{align}
    g(u, v^\star) \leq g(u^\star, v^\star) \leq g(u^\star, v), 
    \quad \forall u \in U, v \in V.  \label{eq:saddle-condition}
\end{align}\label{Th: minimax}
\end{theorem}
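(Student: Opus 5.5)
The plan is to transport the problem to real space via $\varphi_1$, apply Sion's minimax theorem \cite{1103040253} there, and pull the conclusion back with Lemma~\ref{lem:saddle-preserve}. Set $\tilde U:=\varphi_1(U)\subset\mathbb{R}^{2n}$, $\tilde V:=\varphi_1(V)\subset\mathbb{R}^{2m}$, and define
\begin{align*}
f(\tilde u,\tilde v):=g(\varphi_1^{-1}(\tilde u),\varphi_1^{-1}(\tilde v)),\qquad (\tilde u,\tilde v)\in\tilde U\times\tilde V .
\end{align*}
By Lemma~\ref{lemm:convex-image} and Lemma~\ref{lemm:compact-image}, the sets $\tilde U$ and $\tilde V$ are convex and compact.

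The next step is to check the hypotheses of Sion's theorem for $f$. Continuity of $f$ is immediate, since $\varphi_1^{-1}$ is a real-linear bijection between finite-dimensional spaces and hence a homeomorphism. For concavity in $\tilde u$, fix $\tilde v$ and set $v=\varphi_1^{-1}(\tilde v)$. Then $u\mapsto g(u,v)$ is concave on $U$, so Lemma~\ref{lemm:convexity-preservation} shows that $\tilde u\mapsto f(\tilde u,\tilde v)$ is concave on $\tilde U$. Alternatively one can argue directly: $\varphi_1^{-1}$ commutes with real convex combinations, so
\begin{align*}
f(t\tilde u_1+(1-t)\tilde u_2,\tilde v)=g(tu_1+(1-t)u_2,v)
\end{align*}
for $t\in[0,1]$. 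The same argument gives convexity in $\tilde v$. Sion's theorem then yields $\max_{\tilde U}\min_{\tilde V} f=\min_{\tilde V}\max_{\tilde U} f$. Both extrema are attained because $f$ is continuous on compact sets, so there is a saddle point $(\tilde u^\star,\tilde v^\star)$ with
\begin{align*}
f(\tilde u,\tilde v^\star)\le f(\tilde u^\star,\tilde v^\star)\le f(\tilde u^\star,\tilde v)\qquad\text{for all }\tilde u\in\tilde U,\ \tilde v\in\tilde V .
\end{align*}

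To return to the complex setting, define $u^\star:=\varphi_1^{-1}(\tilde u^\star)$ and $v^\star:=\varphi_1^{-1}(\tilde v^\star)$. Lemma~\ref{lem:saddle-preserve} then shows that $(u^\star,v^\star)$ satisfies \eqref{eq:saddle-condition}. I will apply that lemma with its inequalities oriented for a max player $u$ and a min player $v$. The roles in its displayed proof are written the other way round, but the proof is symmetric, so this causes no difficulty. Since $\varphi_1$ is a bijection from $U$ onto $\tilde U$ and from $V$ onto $\tilde V$, optimizing over $U$ is the same as optimizing over $\tilde U$, and likewise for $V$. This gives
\begin{align*}
\max_U\min_V g=\max_{\tilde U}\min_{\tilde V} f,\qquad \min_V\max_U g=\min_{\tilde V}\max_{\tilde U} f,
\end{align*}
and hence \eqref{eq:minimax}.

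The last part of the statement is the standard equivalence between saddle points and the minimax equality. From \eqref{eq:saddle-condition} one gets $\min_V\max_U g\le\max_U g(u,v^\star)=g(u^\star,v^\star)=\min_V g(u^\star,v)\le\max_U\min_V g$. The weak-duality inequality in the other direction holds trivially, so the two values coincide with $g(u^\star,v^\star)$.

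I expect the main obstacle to be bookkeeping rather than substance. Concavity and convexity of $g$ on $\mathbb{C}^n$ must be understood with respect to real convex combinations, because $g$ is real-valued. With that reading, $\varphi_1$ preserves convex combinations and convexity transfers cleanly. The only other point needing care is the orientation of the inequalities in Lemma~\ref{lem:saddle-preserve}.
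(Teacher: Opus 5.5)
Your proposal is correct and follows the paper's proof essentially step for step: map $U$, $V$ and $g$ to real space via $\varphi_1$; use Lemmas~\ref{lemm:convex-image}, \ref{lemm:compact-image} and \ref{lemm:convexity-preservation} to get a continuous concave--convex $f$ on compact convex sets; apply a real minimax theorem; and pull the saddle point back with Lemma~\ref{lem:saddle-preserve}. Your version is slightly more careful than the paper's in three places: you cite Sion's theorem, which is the right tool for a general concave--convex $f$ (the paper cites von Neumann's), you spell out why the extrema are attained so that a saddle point exists, and you flag the reversed orientation of the inequalities in the proof of Lemma~\ref{lem:saddle-preserve}.
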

\begin{proof}
Define $\tilde U := \{\varphi_1(u):u\in U\},\quad
\tilde V := \{\varphi_1(v):v\in V\}.$ By lemmas~\eqref{lemm:convex-image} and \eqref{lemm:compact-image}, 
$\tilde U$ and $\tilde V$ are convex and compact. Define $f(\tilde u,\tilde v) := g(\varphi_1^{-1}(\tilde u),\varphi_1^{-1}(\tilde v))$.  
By lemma~\eqref{lemm:convexity-preservation}, if $g(\cdot,v)$ is concave then $f(\cdot,\tilde v)$ is concave, and if $g(u,\cdot)$ is convex then $f(\tilde u,\cdot)$ is convex. Thus $f$ is a real-valued concave-convex function on compact convex sets.  By Von Neumann’s minimax theorem in real spaces,
\begin{align}
    \max_{\tilde u \in \tilde U} \min_{\tilde v \in \tilde V} f(\tilde u,\tilde v)
    = \min_{\tilde v \in \tilde V} \max_{\tilde u \in \tilde U} f(\tilde u,\tilde v).
\end{align}
Mapping back to the complex domain gives \eqref{eq:minimax}. Finally, the existence of a saddle point $(\tilde u^\star,\tilde v^\star)$ for $f$ implies, by Lemma~\eqref{lem:saddle-preserve}, that $(u^\star,v^\star)=(\varphi_1^{-1}(\tilde u^\star),\varphi_1^{-1}(\tilde v^\star))$ is a saddle point of $g$, satisfying \eqref{eq:saddle-condition}.
\end{proof}
We now present the dual norm in the complex domain, which is essential for characterizing the saddle-point equilibrium.
\begin{theorem}[Dual of Norm of Complex Vector]
The dual of the norm $\|z\|$, $z\in \mathbb{C}^n$, can be expressed as
\begin{align}
    \|z\| \;=\; \sup_{u\in\mathbb{C}^n} 
    \Big\{ \operatorname{Re}(z^H u) \;\big|\; \|u\|\leq 1 \Big\} =\; \max_{\|u\|\leq 1} \operatorname{Re}(z^H u).
\end{align}\label{lem: dual norm}
\end{theorem}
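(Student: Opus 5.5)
The plan is to transfer the statement to $\mathbb{R}^{2n}$ through the isomorphism $\varphi_1$, use the real Cauchy--Schwarz inequality there, and then show the supremum is attained.

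\textbf{Step 1 (transfer to the real domain).} By property 5 of $\varphi_1$, for every $z,u\in\mathbb{C}^n$,
\[
\operatorname{Re}(z^H u)=\varphi_1(z)^T\varphi_1(u) \quad\text{and}\quad \|u\|=\|\varphi_1(u)\| .
\]
Since $\varphi_1$ is a bijection from $\mathbb{C}^n$ onto $\mathbb{R}^{2n}$, it maps the complex unit ball $\{u:\|u\|\le 1\}$ exactly onto the real Euclidean unit ball in $\mathbb{R}^{2n}$. Hence the supremum in the statement equals
\[
\sup\{\varphi_1(z)^T w : w\in\mathbb{R}^{2n},\ \|w\|\le 1\}.
\]

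\textbf{Step 2 (upper bound).} For any feasible $u$, the real Cauchy--Schwarz inequality gives
\[
\operatorname{Re}(z^H u)=\varphi_1(z)^T\varphi_1(u)\le \|\varphi_1(z)\|\,\|\varphi_1(u)\|\le \|\varphi_1(z)\| = \|z\| .
\]
So the supremum is at most $\|z\|$.

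\textbf{Step 3 (attainment).} If $z=0$, both sides are zero and any feasible $u$ attains the value. If $z\neq 0$, take $u^\star=z/\|z\|$. Then $\|u^\star\|=1$ and
\[
\operatorname{Re}(z^H u^\star)=\operatorname{Re}\bigl(z^H z\bigr)/\|z\| = \|z\|^2/\|z\|=\|z\|,
\]
because $z^H z=\|z\|^2$ is real. Therefore the supremum is attained and equals the maximum.

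Alternatively, attainment follows from continuity of $u\mapsto \operatorname{Re}(z^H u)$ on the closed unit ball, which is compact (by Lemma~\ref{lemm:compact-image} applied through $\varphi_1^{-1}$). The explicit maximizer $u^\star$ already settles it, however.

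\textbf{Main obstacle.} The only delicate point is the identity $\operatorname{Re}(z^H u)=\varphi_1(z)^T\varphi_1(u)$. Expanding gives
\[
\operatorname{Re}\bigl((z_R - i z_I)^T(u_R+iu_I)\bigr)=z_R^Tu_R+z_I^Tu_I ,
\]
which confirms it. I would also note that the paper's written formula $\|a\|=\sqrt{aa^H}$ should be read as $\sqrt{a^H a}$ for column vectors.
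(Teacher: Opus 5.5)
Your proof is correct and follows the paper's route: map through $\varphi_1$, use $\operatorname{Re}(z^H u)=\varphi_1(z)^T\varphi_1(u)$ and $\|u\|=\|\varphi_1(u)\|$, and reduce to the real Euclidean dual-norm identity. The only difference is that the paper simply cites that real identity, whereas you prove it directly by Cauchy--Schwarz and exhibit the maximizer $z/\|z\|$, which makes attainment of the maximum explicit.
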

\begin{proof}
Consider the real-linear isomorphism $\varphi_1:\mathbb{C}^n \to \mathbb{R}^{2n}$ defined by $\varphi_1(z) = \tilde z.$ In the real space, the dual norm is given by $\|\tilde z\| = \sup_{\tilde u \in \mathbb{R}^{2n}} 
    \Big\{ \tilde z^T \tilde u \;\big|\; \|\tilde u\|\leq 1 \Big\}
    = \max_{\|\tilde u\|\leq 1} \tilde z^T \tilde u$ \cite{rockafellar1970convex}. Using the identity $\tilde z^T \tilde u = \operatorname{Re}(z^H u)$ and noting that 
$\|\tilde u\| = \|u\|$, we obtain $\|z\| \;=\; \sup_{\|u\|\leq 1}\operatorname{Re}(z^H u).$ 
\end{proof}
\section{Zero-Sum Game with Complex Payoff Matrix}\label{sec:3}
In this section, we introduce the zero-sum game model with a complex-valued payoff matrix. We begin by defining the utility function and the mixed-strategy sets for both players and then incorporate complex linear constraints into each player's strategy space. After establishing this framework, we prove the existence of a saddle point and characterize it through the primal and dual formulations associated with each player.

\begin{definition}
A complex zero-sum game $G$ in strategic form is defined by the triple 
\[
G=(I,J,g),
\]
where $I\subset\mathbb{C}^n$ (resp.\ $J\subset\mathbb{C}^m$) is the nonempty strategy set of player~1 (resp.\ player~2), and 
$g:I\times J\to\mathbb{C}$ is the complex-valued game function. 
\end{definition}
Player~1 chooses $i\in I$ and player~2 chooses $j\in J$, independently (e.g., simultaneously).  
The payoff of player~1 is $g(i,j)$ and that of player~2 is $-g(i,j)$.  
Thus, the evaluations induced by the joint choice $(i,j)$ are opposite for the two players: player~1 is the maximizer and player~2 is the minimizer.

When $I$ and $J$ are finite, the game $G=(I,J,A)$ is represented by an $n\times m$ complex matrix $A$, where player~1 chooses a row $i\in I$, player~2 chooses a column $j\in J$, and the entry $A_{ij}$ corresponds to the game function $g(i,j)$.

In many situations, it is natural for players to randomize their behavior. A mixed strategy can also be interpreted as specifying the opponent's degree of belief that a given action will be selected. Mathematically, mixed strategies allow the analysis to take place in convex and compact sets and enable interactions between the real and imaginary parts of the game function and the probability vectors. To formalize randomized behavior, both players adopt {complex-valued mixed strategies} whose real parts form valid probability distributions. The feasible sets are
\begin{align}
U &:= \left\{ u\in\mathbb{C}^n : \|u\|\le\alpha_1,\ \operatorname{Re}(u)\ge 0,\ \sum_{i=1}^n u_i=1 \right\}, \label{4}\\
V &:= \left\{ v\in\mathbb{C}^m : \|v\|\le\alpha_2,\ \operatorname{Re}(v)\ge 0,\ \sum_{i=1}^m v_i=1 \right\}, \label{5}
\end{align}
for player~1 and player~2, respectively, where $\alpha_1\ge 1/\sqrt{n}$ and $\alpha_2\ge 1/\sqrt{m}$ are norm bounds controlling the allowable magnitude of complex strategies. The constraints $\operatorname{Re}(u)\ge0$ (resp.\ $\operatorname{Re}(v)\ge0$) and $\sum_i u_i=1$ (resp.\ $\sum_i v_i=1$) ensure that the real parts represent valid probability distributions over the $n$ (resp.\ $m$) pure actions.

The imaginary parts $\operatorname{Im}(u)$ and $\operatorname{Im}(v)$ do not carry probability mass, since their components sum to zero. Instead, they influence the payoff through interference terms in $\operatorname{Re}(u^HAv)$. The Euclidean norm constraints
\begin{align}
\|u\|^2 &= \|\operatorname{Re}(u)\|^2 + \|\operatorname{Im}(u)\|^2 \le \alpha_1^2,\\
\|v\|^2 &= \|\operatorname{Re}(v)\|^2 + \|\operatorname{Im}(v)\|^2 \le \alpha_2^2,
\end{align}
create a tradeoff between the magnitude of the imaginary and real components. The role of the bounds $\alpha_1,\alpha_2$ is:
\begin{itemize}
\item If $\alpha_1<1/\sqrt{n}$ (resp.\ $\alpha_2<1/\sqrt{m}$), the feasible sets are empty, since even the uniform distribution has norm $1/\sqrt{n}$ (resp.\ $1/\sqrt{m}$).
\item If $1/\sqrt{n}\le\alpha_1<1$ (resp.\ $1/\sqrt{m}\le\alpha_2<1$), real and imaginary parts are tightly coupled; using more phase freedom reduces the feasible region.
\item If $\alpha_1\ge1$ (resp.\ $\alpha_2\ge1$), then the norm bound does not restrict the real probabilities, since any probability vector satisfies $\|\operatorname{Re}(u)\|\le1$. The constraint reduces to $\|\operatorname{Im}(u)\|\le\alpha_1$. In this case, we can make a linear bound for the imaginary part $|\operatorname{Im}(u_i)|\le\alpha_1$. In this regime, the norm constraint becomes linear and the optimization problem remains a linear program.
\end{itemize}
The mixed extension of a finite game $G=(I,J,g)$ is the game
\[
\mathcal{G}=(U,V,g),
\]
where $g$ is extended multi-linearly:
\[
g(u,v)=\mathbb{E}_{u\otimes v}[g]=u^Hg(i,j)v=\sum_{i\in I}\sum_{j\in J} \bar{u}_i v_j g(i,j).
\]
A pure action $i\in I$ is identified with the Dirac vector $e_i\in U$, and similarly for player~2. The support of a mixed strategy $u$ is $\operatorname{supp}(u)=\{\,i:\operatorname{Re}(u_i)>0\,\}$. When $G$ is represented by a matrix $A$, a mixed strategy $u\in U$ corresponds to a row vector and $v\in V$ to a column vector, with complex payoff $g(u,v)=u^HAv$.
\begin{theorem}\label{thm:minmax}
Let $A$ be an $n\times m$ complex matrix. Then there exists $(u^{*},v^{*},\varrho)\in U\times V\times\mathbb{R}$ such that
\[
\operatorname{Re}(u^{*H}Av)\ge\varrho,\ \forall v\in V,
\qquad\text{and}\qquad
\operatorname{Re}(u^HAv^{*})\le\varrho,\ \forall u\in U.
\]
The real number $\varrho$ is uniquely determined and corresponds to the value of the game:
\[\varrho=\max_{u\in U}\min_{v\in V}\operatorname{Re}(u^HAv)=\min_{v\in V}\max_{u\in U}\operatorname{Re}(u^HAv)\]
\end{theorem}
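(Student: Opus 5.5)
The plan is to apply the complex minimax theorem (Theorem~\ref{Th: minimax}) to $g(u,v):=\operatorname{Re}(u^HAv)$ on $U\times V$. The saddle inequalities and the value $\varrho$ will then be read off directly. Three hypotheses have to be checked: $U$ and $V$ are nonempty, compact and convex; $g$ is continuous and real valued; and $g$ is concave in $u$ and convex in $v$.

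\textbf{Step 1 (the sets).} Nonemptiness follows from $\alpha_1\ge 1/\sqrt n$: the uniform vector $u=\frac1n\mathbf 1$ has norm $1/\sqrt n$, satisfies $\operatorname{Re}(u)\ge0$, and sums to one. The same argument works for $V$ with $\alpha_2\ge 1/\sqrt m$.

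For convexity, $U$ is the intersection of three sets:
\begin{itemize}
\item the ball $\{\|u\|\le\alpha_1\}$, which is convex because it is a norm ball;
\item the set $\{\operatorname{Re}(u)\ge0\}$, a real-linear half-space system;
\item the affine subspace $\{\sum_i u_i=1\}$, which in real form means $\sum_i \operatorname{Re}(u_i)=1$ and $\sum_i\operatorname{Im}(u_i)=0$.
\end{itemize}

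For compactness, each of these sets is closed, since it is a preimage of a closed set under a continuous map. The intersection is bounded by $\alpha_1$, so it is closed and bounded in $\mathbb C^n\cong\mathbb R^{2n}$ and hence compact (cf. Lemma~\ref{lemm:compact-image}). The argument for $V$ is identical.

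\textbf{Step 2 (the payoff).} Using the isomorphism properties, $\operatorname{Re}(u^HAv)=\varphi_1(u)^T\varphi_1(Av)=\varphi_1(u)^T\varphi_2(A)\varphi_1(v)$. So $f(\tilde u,\tilde v)=\tilde u^T\varphi_2(A)\tilde v$ is a real bilinear form in $(\tilde u,\tilde v)$. It is therefore continuous, and it is linear, hence both concave and convex, in each argument separately. By Lemma~\ref{lemm:convexity-preservation} this transfers back to $g$, or one can simply note that $g$ is real-affine in $u$ for fixed $v$ and vice versa.

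\textbf{Step 3 (conclusion).} Theorem~\ref{Th: minimax} now gives a saddle point $(u^*,v^*)$ satisfying $g(u,v^*)\le g(u^*,v^*)\le g(u^*,v)$ for all $u\in U$ and $v\in V$, together with the max–min equality. Setting $\varrho:=g(u^*,v^*)$ yields the two displayed inequalities.

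It remains to show that $\varrho$ equals both the max–min and min–max values. From the first saddle inequality,
\[
\max_{u\in U}\min_{v\in V} g\ \ge\ \min_{v\in V} g(u^*,v)\ \ge\ \varrho .
\]
From the second,
\[
\min_{v\in V}\max_{u\in U} g\ \le\ \max_{u\in U} g(u,v^*)\ \le\ \varrho .
\]
Combining these with the weak duality inequality $\max\min\le\min\max$ squeezes all three quantities together, so both equal $\varrho$.

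\textbf{Uniqueness of $\varrho$.} The same squeeze applies to any triple $(u',v',\varrho')$ satisfying the two inequalities of the statement. Indeed, $\min_v g(u',v)\ge\varrho'$ gives $\max\min\ge\varrho'$, and $\max_u g(u,v^{\prime})\le\varrho'$ gives $\min\max\le\varrho'$. Hence $\varrho'=\max\min=\min\max$, which does not depend on the chosen strategies.

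\textbf{Main obstacle.} Nothing here is deep. The point needing the most care is Step 1: confirming that the complex equality $\sum_i u_i=1$ imposes exactly two real-linear constraints, so that $U$ is genuinely convex and closed. This matters because $U$ is not a complex subspace: the condition $\operatorname{Re}(u)\ge0$ is only real-convex. That is precisely why we work through $\varphi_1$ and treat convexity over $\mathbb R$.
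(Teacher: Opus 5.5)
Your proof is correct and follows the same route as the paper, which simply invokes the complex minimax theorem after noting that $U$ and $V$ are convex and compact and that $\operatorname{Re}(u^HAv)$ is bilinear. You also supply the details the paper leaves implicit: nonemptiness via $\alpha_1\ge 1/\sqrt n$, the real bilinear form $\varphi_1(u)^T\varphi_2(A)\varphi_1(v)$, and the squeeze argument identifying $\varrho$ and proving its uniqueness, all of which are sound.
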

\begin{proof}
Since the payoff function is bilinear and the mixed strategy spaces are convex and compact, the conditions of the Minimax Theorem~\eqref{Th: minimax} hold, and the result follows.
\end{proof}
For player~1, let $r_1\in\mathbb{R}_+^n$, $\rho_1\in\mathbb{C}$, and $\beta_1\in\mathbb{C}^n$ denote dual variables associated with the norm and linear constraints, and let $\mathbf{1}_n$ denote the $n$-vector of ones.  
Likewise, for player~2, let $r_2\in\mathbb{R}_+^m$, $\rho_2\in\mathbb{C}$, and $\beta_2\in\mathbb{C}^m$ be the corresponding dual variables. The two dual linear programs are
\begin{equation}
\begin{aligned}
\min_{v,\beta_1,\rho_1,r_1}
&\quad \alpha_1\|\beta_1\|+\operatorname{Re}(\rho_1)\\
\text{s.t.}\quad
& Av-\beta_1+r_1-\bar{\rho}_1\mathbf{1}_n=0,\\
& v\in V,\qquad r_1\ge0
\end{aligned}\tag{P1}\label{pb: P1}
\end{equation}
\begin{equation}
\begin{aligned}
\max_{u,\beta_2,\rho_2,r_2}
&\quad -\alpha_2\|\beta_2\|-\operatorname{Re}(\rho_2)\\
\text{s.t.}\quad
& A^H u+\beta_2-r_2+\bar{\rho}_2\mathbf{1}_m=0,\\
& u\in U,\qquad r_2\ge0
\end{aligned}\tag{D1}\label{pb: D1}
\end{equation}
We now incorporate additional linear constraints for each player. The constrained game is
\[
G=\max_{u\in S_1}\min_{v\in S_2}\operatorname{Re}(u^HAv)
\]
with feasible sets
\begin{align}
S_1 &= \{\,u\in U: \operatorname{Re}(Bu)\le b\,\}, \label{S1}\\
S_2 &= \{\,v\in V: \operatorname{Re}(Dv)\ge d\,\},\label{S2}
\end{align}
where $B\in\mathbb{C}^{l\times n}$, $D\in\mathbb{C}^{q\times m}$, $b\in\mathbb{R}^l$, and $d\in\mathbb{R}^q$.  
For fixed $v\in S_2$, player~1 solves
\begin{equation}
\begin{aligned}
\max_{u\in\mathbb{C}^n}\ &\operatorname{Re}(u^HAv)\\
\text{s.t.}\ &\operatorname{Re}(Bu)\le b,\\
&\|u\|\le\alpha_1,\ \operatorname{Re}(u)\ge0,\ \sum_{i=1}^n u_i=1,
\end{aligned}\label{23}
\end{equation}
and for fixed $u\in S_1$, player~2 solves
\begin{equation}
\begin{aligned}
\min_{v\in\mathbb{C}^m}\ &\operatorname{Re}(u^HAv)\\
\text{s.t.}\ &\operatorname{Re}(Dv)\ge d,\\
&\|v\|\le\alpha_2,\ \operatorname{Re}(v)\ge0,\ \sum_{i=1}^m v_i=1.
\end{aligned}\label{24}
\end{equation}
A pair $(u^\star,v^\star)$ is a {saddle-point equilibrium} if $u^\star$ solves \eqref{23} for fixed $v^\star$ and $v^\star$ solves \eqref{24} for fixed $u^\star$.

\textbf{Assumption 1 (Slater).} The sets $S_1$ and $S_2$ are strictly feasible: there exist $u\in S_1$ and $v\in S_2$ such that all defining inequalities hold with strict slack.

If the players choose purely real strategies ($\operatorname{Im}(u)=\operatorname{Im}(v)=0$), then for $\alpha_1>1$ and $\alpha_2>1$ (or equivalently, we remove the imagenary part from the mixed strategies), the feasible sets reduce to the classical simplices $\Delta^n$ and $\Delta^m$.  
In particular, when $\alpha_1=\alpha_2=1$, the model recovers the standard real zero-sum game.

\begin{theorem}
Let $S_1$ and $S_2$ be the feasible strategy sets defined in 
\eqref{S1}--\eqref{S2}, $f(u,v)=\operatorname{Re}(u^HAv)$, and let Assumption 1 hold. Then the complex zero sum game $G$ admits a saddle point equilibria $(u^{\star},v^{\star}) \in S_1 \times S_2$ such that
\begin{align}
    f(u^{\star},v) \le f(u^{\star},v^{\star}) \le f(u,v^{\star}), 
    \qquad \forall u \in S_1,\; v \in S_2.
\end{align}
\end{theorem}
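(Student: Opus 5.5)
The plan is to reduce the statement to the complex Minimax Theorem~\ref{Th: minimax}. That theorem needs three things: the strategy sets must be nonempty, convex and compact, and the payoff must be continuous and concave--convex. Once these hold, it produces a pair $(u^\star,v^\star)\in S_1\times S_2$ satisfying the saddle inequalities. Note that the orientation in the statement, $f(u^\star,v)\le f(u^\star,v^\star)\le f(u,v^\star)$, is the one of Lemma~\ref{lem:saddle-preserve}. I will therefore apply the minimax theorem with the roles arranged so that the resulting inequality matches this display, and check that the orientation is consistent with player~1 maximizing over $S_1$.

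\textbf{Step 1 (sets).} Nonemptiness of $S_1,S_2$ is exactly Assumption~1. For convexity, $U$ is an intersection of three convex sets: a Euclidean ball $\{\|u\|\le\alpha_1\}$, the half-spaces $\{\operatorname{Re}(u_i)\ge 0\}$, and the affine set $\{\sum_i u_i=1\}$. In addition, each constraint $\operatorname{Re}(B_k u)\le b_k$ is a real-linear half-space in $u$. Indeed, via $\varphi_1$ it reads $\varphi_1(B_k^H)^T\varphi_1(u)\le b_k$, using the identity $\operatorname{Re}(a^Hb)=\varphi_1(a)^T\varphi_1(b)$. Hence $S_1$ is convex, and the same argument applies to $S_2$. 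For compactness, each set is closed, being a finite intersection of preimages of closed sets under continuous maps, and bounded by the norm constraint $\|u\|\le\alpha_1$. By Heine--Borel in $\mathbb{R}^{2n}\cong\mathbb{C}^n$, which can be justified through Lemma~\ref{lemm:compact-image} or directly, $S_1$ and $S_2$ are compact.

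\textbf{Step 2 (payoff).} Write $f(u,v)=\operatorname{Re}(u^HAv)=\varphi_1(u)^T\varphi_2(A)\varphi_1(v)$, using the listed properties of $\varphi_1$ and $\varphi_2$. This shows $f$ is a real bilinear form in $(\varphi_1(u),\varphi_1(v))$. It is therefore continuous, and it is simultaneously concave and convex (affine) in each argument separately. By Lemma~\ref{lemm:convexity-preservation}, the same holds for $f(\tilde u,\tilde v)$ on the real images.

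\textbf{Step 3 (conclusion).} Apply Theorem~\ref{Th: minimax} with $U:=S_1$, $V:=S_2$ and $g:=f$. This yields a saddle point together with the minimax equality, and Lemma~\ref{lem:saddle-preserve} transfers the saddle structure between the real and complex forms. Because $f$ is affine in each variable, it is both concave and convex in each argument, so the theorem applies to either orientation of the inequalities. I then take the orientation displayed in the statement.

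The main obstacle I expect is not any analytic step but bookkeeping. First, one must confirm that the complex linear constraints are real-affine: $u\mapsto \operatorname{Re}(Bu)$ is $\mathbb{R}$-linear but not $\mathbb{C}$-linear, so convexity must be argued over $\mathbb{R}$ via $\varphi_1$. Second, one must reconcile the direction of the saddle inequalities with the max--min roles of the players. Slater's condition is needed here only for nonemptiness; its full strength will matter for the subsequent primal--dual characterization, not for existence.
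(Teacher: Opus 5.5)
Your proposal is correct and follows essentially the same route as the paper. You check that $S_1,S_2$ are nonempty, convex and compact, check that $\operatorname{Re}(u^HAv)=\varphi_1(u)^T\varphi_2(A)\varphi_1(v)$ is continuous and bilinear, and then invoke Theorem~\ref{Th: minimax}; you are in fact slightly more careful than the paper in taking nonemptiness from Assumption~1 rather than from the half-space intersection.

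One bookkeeping remark on orientation. Applying the theorem with $g:=f$ gives $f(u,v^\star)\le f(u^\star,v^\star)\le f(u^\star,v)$, which is the equilibrium in the sense of \eqref{23}--\eqref{24}. The inequality displayed in the statement instead comes from applying the theorem to $g:=-f$. So your promised check of consistency with player~1 maximizing would actually fail for the display as written. This reflects a reversed orientation in the statement itself, not a flaw in your argument.
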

\begin{proof}
By Theorem~\ref{Th: minimax}, it suffices that $S_1$ and $S_2$ are nonempty, compact, and convex, and that $f(u,v)=\operatorname{Re}(u^HAv)$ is continuous and concave in $u$ for fixed $v$ and convex in $v$ for fixed $u$. The sets $U$ and $V$ are closed and bounded subsets of $\mathbb{C}^n$ and $\mathbb{C}^m$, hence compact and convex. The sets $S_1$ and $S_2$ are obtained by intersecting $U$ and $V$ with finitely many closed halfspaces defined by the linear constraints $\operatorname{Re}(Bu)\le b$ and $\operatorname{Re}(Dv)\ge d$, and are therefore nonempty, compact, and convex as well. Therefore, the minimax conditions are satisfied, and a saddle-point equilibrium $(u^\star,v^\star)\in S_1\times S_2$ exists.
\end{proof}
The Lagrangian function of problem \eqref{23} is given by
\begin{align}
    \mathcal{L}(u;v,\lambda_1,s,\rho_1,r_1)
    &= \operatorname{Re}(u^HAv)
    - \sum_{i=1}^l \lambda_{1i}\, \operatorname{Re}\left(B_i u - b_i\right)
    - s\left(\|u\|-\alpha_1\right)
    + r_{1}^T \operatorname{Re}(u)
    + \operatorname{Re}\left(\rho_1 (1-\mathbf{1}_n^T u)\right),\label{lagr1}
\end{align}
where $u\in\mathbb{C}^n$ and $v\in S_2$ are the strategies, 
$\lambda_1\in\mathbb{R}_+^l$, $s\in\mathbb{R}_+$, $r_1\in\mathbb{R}_+^n$, 
$\rho_1\in\mathbb{C}$ are the Lagrange multipliers, and 
$\mathbf{1}_n$ denotes the $n$-dimensional vector of ones. Using the dual representation of the norm of a complex vector (see Lemma~\eqref{lem: dual norm}), we can express the contribution of the norm constraint via an auxiliary variable $\beta_1\in\mathbb{C}^n$ as
\begin{align*}
-s\bigl(\|u\|-\alpha_1\bigr)
&\;=\;\min_{\beta_1\in\mathbb{C}^n} \bigl\{-\operatorname{Re}(\beta_1^H u) + \alpha_1\|\beta_1\|\bigr\}.
\end{align*}
Substituting this into \eqref{lagr1} yields the equivalent Lagrangian
\begin{align*}
    \mathcal{L}(u;v,\lambda_1,\beta_1,\rho_1,r_1)
    &= \operatorname{Re}(u^HAv)
    - \sum_{i=1}^l \lambda_{1i}\, \operatorname{Re}\left(B_i u - b_i\right)
    - \operatorname{Re}(\beta_1^H u)
    + \alpha_1\|\beta_1\|
    + r_{1}^T \operatorname{Re}(u)
    + \operatorname{Re}\left(\rho_1 (1-\mathbf{1}_n^T u)\right)\\[0.5em]
    &= \operatorname{Re} \left((Av -B^H\lambda_1-\beta_1 + r_1 - \bar{\rho}_1 \mathbf{1}_n)^H u\right)
        + \lambda_1^T b + \alpha_1\|\beta_1\| + \operatorname{Re}(\rho_1),
\end{align*}
where $\beta_1\in\mathbb{C}^n$ is the dual variable associated with the norm constraint. Maximizing the Lagrangian over $u$ is finite if and only if the coefficient of $u$ vanishes:
\begin{equation}
    Av -B^H\lambda_1-\beta_1 + r_1 - \bar{\rho}_1 \mathbf{1}_n= 0.
\end{equation}
The resulting dual problem associated with \eqref{23} is then
\begin{equation}
    \begin{aligned}
    \min_{v,\lambda_1,\beta_1,\rho_1,r_1}
    &\quad  \lambda_1^T b + \alpha_1\|\beta_1\|+ \operatorname{Re}(\rho_1) \\
    \text{s.t.}\quad
    & Av -B^H\lambda_1-\beta_1 + r_1 - \bar{\rho}_1 \mathbf{1}_n= 0,\\
    & \operatorname{Re}(Dv)\ge d,\\
    & \|v\| \le \alpha_2,\quad \mathbf{1}_m^T v = 1,\quad \operatorname{Re}(v)\ge 0,\\
    & r_1 \ge 0,\quad  \lambda_1\ge 0.
    \end{aligned}\tag{P2}\label{pb: P2}
\end{equation}
By symmetry (exchanging the players’ roles), we obtain the dual problem for 
player~2’s:
\begin{equation}
    \begin{aligned}
    \max_{u,\lambda_2,\beta_2,\rho_2,r_2}
    &\quad \lambda_{2}^T d - \alpha_2\|\beta_2\| - \operatorname{Re}(\rho_2)\\
    \text{s.t.}\quad
    & A^Hu - D^H\lambda_{2} + \beta_2 - r_2 + \bar{\rho}_2\mathbf{1}_m = 0,\\
    & \operatorname{Re}(Bu)\le b,\\
    & \|u\| \le \alpha_1,\quad \mathbf{1}_n^T u = 1,\quad \operatorname{Re}(u)\ge 0,\\
    & r_2 \ge 0,\quad   \lambda_2\ge 0,
    \end{aligned}\tag{D2}\label{pb: D2}
\end{equation}
where $\lambda_2\in\mathbb{R}_+^q$, $r_2\in\mathbb{R}_+^m$, 
$\beta_2\in\mathbb{C}^m$, and $\rho_2\in\mathbb{C}$ are the corresponding dual variables. 
\begin{theorem}
Consider a complex constrained zero-sum game where the matrices 
$B \in \mathbb{C}^{l \times n}$ and $D \in \mathbb{C}^{q \times m}$ define the linear constraints of player~1 and player~2, respectively. Then, a pair $(u^\star,v^\star)$ is a saddle-point equilibrium if and only if there exist dual variables $(\rho_1^{\star}, r_1^{\star}, \lambda_{1}^{\star}, \beta_1^{\star})$ and $(\rho_2^{\star}, r_2^{\star}, \lambda_{2}^{\star}, \beta_2^{\star})$ such that  $(v^\star,\rho_1^{\star},r_1^{\star},\lambda_{1}^{\star},\beta_1^{\star})$ and $(u^\star,\rho_2^{\star},r_2^{\star},\lambda_{2}^{\star},\beta_2^{\star})$ are optimal solutions of the primal-dual pair \eqref{pb: P2}-\eqref{pb: D2}, respectively.
\end{theorem}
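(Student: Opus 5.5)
The plan is to reduce everything to strong duality for the two inner convex programs \eqref{23} and \eqref{24}, which is available under Assumption~1, and then to glue the two dualities together through a weak-duality sandwich between \eqref{pb: P2} and \eqref{pb: D2}.

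\emph{Step 1 (Lagrangian duality for the inner programs).} Fix $v\in S_2$. By the derivation preceding the statement, for any $(\lambda_1,\beta_1,\rho_1,r_1)$ with $\lambda_1\ge0$, $r_1\ge0$ satisfying the equality constraint of \eqref{pb: P2}, we have, for every $u\in S_1$,
\[
\operatorname{Re}(u^HAv)\le \lambda_1^Tb+\alpha_1\|\beta_1\|+\operatorname{Re}(\rho_1).
\]
This uses Theorem~\ref{lem: dual norm} in the form $\operatorname{Re}(\beta_1^Hu)\le\|\beta_1\|\,\|u\|\le\alpha_1\|\beta_1\|$, together with $\operatorname{Re}(\mathbf 1_n^Tu)=1$ and the sign conditions. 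Problem \eqref{23} is a convex program (an SOCP after applying $\varphi_1$, by Lemmas~\ref{lemm:convex-image}--\ref{lemm:convexity-preservation}). Slater's condition from Assumption~1 therefore gives strong duality with dual attainment: $\max_{u\in S_1}\operatorname{Re}(u^HAv)$ equals the minimum of the \eqref{pb: P2} objective over the multipliers with $v$ fixed. Note that the equality constraint $\mathbf 1^Tu=1$ needs no slack, since affine constraints are exempt in Slater's condition. Symmetrically, for fixed $u\in S_1$, $\min_{v\in S_2}\operatorname{Re}(u^HAv)$ equals the maximum of the \eqref{pb: D2} objective over $(\lambda_2,\beta_2,\rho_2,r_2)$.

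\emph{Step 2 (values).} Minimizing the result of Step 1 also over $v\in S_2$ shows that the optimal value of \eqref{pb: P2} is $\min_{v\in S_2}\max_{u\in S_1}\operatorname{Re}(u^HAv)$. Likewise, the optimal value of \eqref{pb: D2} is $\max_{u\in S_1}\min_{v\in S_2}\operatorname{Re}(u^HAv)$. By the existence theorem just proved (equivalently, Theorem~\ref{Th: minimax} on the compact convex sets $S_1,S_2$), these two values coincide and equal the game value $\varrho$.

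\emph{Step 3 (both directions).} ($\Rightarrow$) Suppose $(u^\star,v^\star)$ is a saddle point. Then $v^\star$ attains the outer min in the min--max, so $\max_{u}\operatorname{Re}(u^HAv^\star)=\varrho$. The dual-attaining multipliers from Step 1 at $v^\star$ then give a feasible point of \eqref{pb: P2} with value $\varrho$, which is optimal by Step 2. The symmetric argument produces an optimal point of \eqref{pb: D2} with $u^\star$. ($\Leftarrow$) Suppose the two tuples are optimal. Using the weak-duality inequality of Step 1 and its mirror image, for all $u\in S_1$, $v\in S_2$,
\[
\operatorname{Re}(u^HAv^\star)\le \mathrm{val}(P2)=\varrho=\mathrm{val}(D2)\le \operatorname{Re}(u^{\star H}Av).
\]
Taking $u=u^\star$ and $v=v^\star$ forces equality, and this sandwich is exactly the saddle-point inequality.

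\emph{Main obstacle.} The delicate point is Step 1: justifying strong duality and dual attainment for a complex program with a norm constraint and the complex affine constraint $\sum u_i=1$. I would handle it by pushing everything through $\varphi_1$, so that the problem becomes a real SOCP, and then invoking standard conic duality under Slater. I would also check that the multiplier representation $-s(\|u\|-\alpha_1)=\min_{\beta_1}\{\cdots\}$ is legitimate. It is cleaner to derive the bound directly with $\beta_1=s\,\hat\beta$, $\|\hat\beta\|\le1$, rather than rely on that identity as written.
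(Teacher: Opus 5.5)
Your argument is correct. For ($\Leftarrow$) it is the same weak-duality sandwich the paper uses. Pairing the equality constraint of \eqref{pb: P2} with $u\in S_1$, and that of \eqref{pb: D2} with $v\in S_2$, gives $\operatorname{Re}(u^HAv^\star)\le\mathrm{val}(P2)$ and $\operatorname{Re}(u^{\star H}Av)\ge\mathrm{val}(D2)$, and equality of the two values closes it.

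The difference lies in how that equality and the ($\Rightarrow$) direction are justified. The paper only asserts that strict feasibility gives ``strong duality'' between the two SOCPs, implicitly treating \eqref{pb: P2}--\eqref{pb: D2} as a conic primal--dual pair. It never exhibits multipliers that make $(v^\star,\cdot)$ optimal. You instead apply Slater only to the inner programs \eqref{23} and \eqref{24} with the opponent's strategy fixed, which is exactly what Assumption~1 supplies. You then identify $\mathrm{val}(P2)=\min_v\max_u$ and $\mathrm{val}(D2)=\max_u\min_v$, equate them via the minimax theorem, and obtain the forward direction from dual attainment at $v^\star$ and $u^\star$.

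Done carefully, the paper's route would be self-contained: conic duality between \eqref{pb: P2} and \eqref{pb: D2} would itself yield the minimax equality, as in the LP proof of von Neumann's theorem. But it requires checking that the two programs really are conic duals, plus a constraint qualification for that pair. Your route relies on the already-proved minimax theorem, needs nothing beyond Assumption~1, and is complete in both directions.

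Your suspicion about the identity $-s(\|u\|-\alpha_1)=\min_{\beta_1}\{\cdots\}$ is justified. As written, the unconstrained minimum is $0$ or $-\infty$ regardless of $s$. Restricting to $\|\beta_1\|\le s$ and then eliminating $s$ does recover \eqref{pb: P2}.

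One wording fix: because $\rho_1$ is complex, your weak-duality bound needs the full constraint $\mathbf 1_n^Tu=1$, including $\operatorname{Im}(\mathbf 1_n^Tu)=0$. The condition $\operatorname{Re}(\mathbf 1_n^Tu)=1$ alone is not enough.
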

\begin{proof}
Let $(u^\star,v^\star)$ be a saddle-point equilibrium. Since \eqref{pb: P1} and \eqref{pb: D1} are convex SOCPs and strictly feasible, 
strong duality holds. Thus, their optimal values coincide:
\begin{align}
\lambda_{1}^{T\star} b_i +\alpha_1\|\beta_1^{\star}\| + \operatorname{Re}(\rho_1^{\star}) 
=
\lambda_2^{\star T} d - \alpha_2\|\beta_2^{\star}\| - \operatorname{Re}(\rho_2^{\star}).
\end{align}
Conversely, from the primal constraint of \eqref{pb: P2}, multiplying by $u$ and using $Bu \le b$ yields
\begin{align}
\operatorname{Re}(u^H A v^\star) \le \lambda_{1}^{T\star} b_i +\alpha_1\|\beta_1^{\star}\| + \operatorname{Re}(\rho_1^{\star}) , \quad \forall u \in S_1.\label{eq:6}
\end{align}
Similarly, from the dual constraint of \eqref{pb: D2}, multiplying by $v$ and using $Dv \ge d$ gives
\begin{align}
\operatorname{Re}(u^{\star H} A v) \ge \lambda_2^{\star T} d - \alpha_2\|\beta_2^{\star}\| - \operatorname{Re}(\rho_2^{\star}), \quad \forall v \in S_2.\label{eq:7}
\end{align}
Combining \eqref{eq:6}--\eqref{eq:7} yields, $\forall u \in S_1,\forall v \in S_2$:
\begin{align}
\operatorname{Re}(u^H A v^\star)
&\le \operatorname{Re}(\rho_1^{\star}) + \sum_i \lambda_{1i}^{\star} b_i + \alpha_1\|\beta_1^{\star}\|= \lambda_2^{\star T} d - \alpha_2\|\beta_2^{\star}\| - \operatorname{Re}(\rho_2^{\star})
\le \operatorname{Re}(u^{\star H} A v), 
\end{align}
Taking $u=u^\star$ and $v=v^\star$ gives equality throughout, which is exactly the saddle–point condition.  
\end{proof}
\section{Complex Zero-Sum Games with Complex Chance Constraints}\label{sec:4}
Real-world systems rarely operate in perfectly known environments. Constraints on strategies are often affected by measurement noise, model mismatch, or random perturbations. To reflect this, we extend the deterministic game of the previous section by introducing {complex chance constraints (3CP)}: each player must choose a strategy that satisfies its constraints with high probability. To unify notation for the random constraints of both players, we introduce the generic tuple
\[
(M,m,z,p)\in\{(B,b,u,p_1),\,(-D,-d,v,p_2)\},
\]
so that every probabilistic constraint in the game can be written in the compact form
\begin{align}
\mathbb{P}\!\left[ \operatorname{Re}(Mz)\le m\right]\ge p. \label{3CP}
\end{align}
Each row of the random vector $M$ is modeled as
\[
M_i \sim \mathcal{I}(\mu_{M_i},\Gamma_{M_i},J_{M_i}),\qquad i=1,\dots,r,
\]
where $r=\ell$ for Player 1 and $r=q$ for Player 2, and $\mathcal{I}$ denotes the ambiguity set of probability distributions. Thus, the optimization problem faced by Player~1 (with fixed $v$) is
\begin{align}
\max_{u\in U}\;& \operatorname{Re}(u^HAv) \\
\text{s.t.}\;&\mathbb{P}\!\left[ \operatorname{Re}(Bu)\le b\right]\ge p_1, 
\label{eq:cc-player1}
\end{align}
and symmetrically, Player~2 (with fixed $u$) solves
\begin{align}
\min_{v\in V}\;& \operatorname{Re}(u^HAv) \\
\text{s.t.}\;&\mathbb{P}\!\left[ \operatorname{Re}(Dv)\ge d\right]\ge p_2
\label{eq:cc-player2}
\end{align}
We therefore define the chance-constrained feasible sets in unified form:
\[S_1(p_1)=\{u\in U:\mathbb{P}[ \operatorname{Re}(Bu)\le b]\ge p_1\},\]
\[S_2(p_2)=\{v\in V:\mathbb{P}[ \operatorname{Re}(Dv)\ge d]\ge p_2\}.\]
The resulting zero-sum game is
\begin{align}
G(p):=\max_{u\in S_1(p_1)}\;\min_{v\in S_2(p_2)} \operatorname{Re}(u^HAv).
\label{Gp}
\end{align}
Without loss of generality, we assume that we have one chance constraint for each player, i.e., for $(M,m,z,p)\in\{(B,b,u,p_1),(-D,-d,v,p_2)\}$ and we study the constraint \eqref{3CP}.
\begin{lemma}[\cite{10.1007/978-3-032-13589-6_21}]
Let $M=(M_1,\dots,M_n)\in\mathbb{C}^n$ be a complex random vector with mean $\mu_M$, covariance $\Gamma_M$, and pseudo-covariance $J_M$. Then $ \operatorname{Re}(Mz)$ has real mean and variance:
\begin{align}
&\mu_{ \operatorname{Re}(Mz)}= \operatorname{Re}(\mu_M z),\\
&\Gamma_{ \operatorname{Re}(Mz)}
=\tfrac12\big(z^H\Gamma_M z+ \operatorname{Re}(z^T J_M z)\big).
\end{align}
Furthermore, the variance is a second-order function of $z$.
\end{lemma}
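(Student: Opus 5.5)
The plan is to compute both moments directly from the identity $\operatorname{Re}(w)=\tfrac12(w+\bar w)$, applied to the scalar random variable $w:=Mz=\sum_j M_j z_j$. Here $M$ is a row vector, so $Mz$ is a scalar. For the mean, linearity of expectation gives $\mathbb{E}[Mz]=\mu_M z$. Since the real part commutes with expectation (by Definition~\ref{Def: 1}, the mean of a complex variable is taken componentwise on real and imaginary parts), we get $\mu_{\operatorname{Re}(Mz)}=\operatorname{Re}(\mu_M z)$.

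For the variance, let $\tilde M:=M-\mu_M$ and $\tilde w:=\tilde M z$. Then $\operatorname{Re}(Mz)-\operatorname{Re}(\mu_M z)=\tfrac12(\tilde w+\overline{\tilde w})$. Squaring and taking expectations gives
\begin{align*}
\operatorname{Var}(\operatorname{Re}(Mz))=\tfrac14\,\mathbb{E}\big[\tilde w^2+\overline{\tilde w}^2+2|\tilde w|^2\big]=\tfrac12\Big(\mathbb{E}|\tilde w|^2+\operatorname{Re}\,\mathbb{E}[\tilde w^2]\Big),
\end{align*}
where we used $\mathbb{E}[\overline{\tilde w}^2]=\overline{\mathbb{E}[\tilde w^2]}$. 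It remains to express the two scalar moments through $\Gamma_M$ and $J_M$.

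Treat $M$ as a random vector, so that $\Gamma_M=\mathbb{E}[\tilde M^T\overline{\tilde M}]$ and $J_M=\mathbb{E}[\tilde M^T\tilde M]$ in the column convention of Definition~\ref{Def: 1}. Then
\begin{align*}
\mathbb{E}[\tilde w^2]&=\mathbb{E}\big[z^T\tilde M^T\tilde M z\big]=z^TJ_Mz,\\
\mathbb{E}|\tilde w|^2&=\mathbb{E}\big[\overline{\tilde w}\,\tilde w\big]=z^H\,\mathbb{E}\big[\overline{\tilde M}^T\tilde M\big]\,z .
\end{align*}
Substituting these gives the stated formula $\tfrac12\big(z^H\Gamma_Mz+\operatorname{Re}(z^TJ_Mz)\big)$.

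The main obstacle is the bookkeeping of conjugation. Depending on whether $M$ is read as a row or as the column $M^T$, the middle matrix in $\mathbb{E}|\tilde w|^2$ is $\Gamma_M$ or its conjugate $\overline{\Gamma_M}=\Gamma_M^T$. I would fix the convention so that the covariance of the row vector $M$ is $\mathbb{E}[\tilde M^H\tilde M]$, which is consistent with how $\mu_M z$ is written. With that convention the expression is exactly $z^H\Gamma_M z$.

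The pseudo-covariance term needs no conjugation, and $J_M$ is symmetric, so the transpose ambiguity does not affect it. The covariance term is real in any case, because $\Gamma_M$ is Hermitian.

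For the final claim, note that $z^H\Gamma_M z$ and $\operatorname{Re}(z^TJ_Mz)$ are both real quadratic forms in $\varphi_1(z)$. Concretely, the variance equals $\varphi_1(z)^T\Sigma\,\varphi_1(z)$, where $\Sigma$ is the real covariance matrix of $\varphi_1$ applied to the row of $M$ in the arrangement matching $\operatorname{Re}(Mz)=\varphi_1(\bar M^T)^T\varphi_1(z)$ from property~5 of the isomorphism.

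This representation shows the variance is a second-order (quadratic) function of $z$. Since $\Sigma\succeq 0$, it also shows the function is nonnegative and convex, and its square root is a norm $\|\Sigma^{1/2}\varphi_1(z)\|$. This is the form needed later for the SOCP reformulations.
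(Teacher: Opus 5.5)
Your argument is correct. Note that the paper gives no proof of this lemma; it is imported by citation from the authors' earlier work. The only trace of a derivation is the later real--imaginary split in the saddle-point subsection. There, $\tfrac12(u^H\Gamma_Bu+\operatorname{Re}(u^TJ_Bu))$ is rewritten through the real blocks $\Gamma_{B_R},\Gamma_{B_I},\Gamma_{B_R,B_I}$ and packaged as $\tilde u^TK_1\tilde u$. That route writes $\operatorname{Re}(Mz)=M_Rz_R-M_Iz_I$, takes a real variance, and recombines the blocks into $\Gamma_M,J_M$ via Definition~\ref{Def: 1}. You instead stay in complex notation via $\operatorname{Re}w=\tfrac12(w+\bar w)$ and obtain $\mathbb{E}|\tilde w|^2$ and $\mathbb{E}[\tilde w^2]$ in two lines. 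Unlike the split route, this makes the conjugation bookkeeping visible. Your last step recovers the split form anyway: $\Sigma=\operatorname{Cov}(\varphi_1(M^H))$ is exactly the paper's $K_1$ when $M=B$, so the factorization $K_1=Q^TQ$ used in the SOCP comes out of your proof directly.

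One thing to tighten is that your conventions are mixed. $\Gamma_M=\mathbb{E}[\tilde M^H\tilde M]$ is Definition~\ref{Def: 1} applied to the column $M^H$, whereas $J_M=\mathbb{E}[\tilde M^T\tilde M]$ is Definition~\ref{Def: 1} applied to the column $M^T$. Symmetry of $J_M$ removes only the transpose ambiguity, not the conjugation one. If you use $M^H$ for both matrices, the pseudo-covariance becomes $\overline{J_M}$ and the variance reads $\tfrac12\bigl(z^H\Gamma z+\operatorname{Re}(z^HJ\bar z)\bigr)$, which is the combination appearing in the CES characteristic function. With $M^T$ throughout, it reads $\tfrac12\bigl(z^H\overline{\Gamma}z+\operatorname{Re}(z^TJz)\bigr)$. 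So the lemma holds exactly as written only under the pairing you chose. You should state both definitions explicitly rather than assert that the pseudo-covariance term is unaffected by the convention. Finally, $\|\Sigma^{1/2}\varphi_1(z)\|$ is in general only a seminorm, which is all the SOCP reformulation needs.
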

\subsection{Complex Chance-Constrained Games with Complex Elliptically Symmetric Distribution}
We now study the case in which the random constraint vector $M$ follows a {complex elliptically symmetric (CES)} distribution. A central assumption behind CES models is circularity, meaning that the distribution of a complex random vector is invariant under multiplication by any unit-modulus complex number. 
\begin{definition}[Complex Elliptically Symmetric (CES) Distribution {\cite{1502990}}]
A random vector $M\in\mathbb{C}^n$ with mean $\mu_M$, covariance $\Gamma_M$, 
and pseudo-covariance $J_M$ is said to follow a {complex elliptically 
symmetric} (CES) distribution if its characteristic function is
\begin{equation}
\Xi_M(z)=\exp\!\big(i\, \operatorname{Re}(z^H\mu_M)\big)\,
\psi\!\left(z^H\Gamma_M z +  \operatorname{Re}(z^H J_M\bar{z})\right),
\label{eq:CES}
\end{equation}
for some characteristic generator $\psi$. 
We write $M\sim\mathrm{CES}(\mu_M,\Gamma_M,J_M)$.
\end{definition}
In this case, the ambiguity set for each player is simply the CES family:
\[
\mathcal{I}(\mu_M,\Gamma_M,J_M)
=\mathrm{CES}(\mu_M,\Gamma_M,J_M),
\]
\begin{table}
\centering
\small
\caption{Representative subclasses of complex elliptically symmetric (CES) distributions.}
\label{CES}
\begin{tabular}{|l|c|c|c|}
\hline
\textbf{Distribution} & \textbf{Generator $\psi(t)$} & \textbf{Normalizing $C_g$} & \textbf{Special cases} \\
\hline
{Complex student-$t$} 
& $  \left(1+\frac{2t}{\nu}\right)^{-\frac{2m+\nu}{2}}  $ 
& $\frac{2^{m}\Gamma\left(\frac{2m+\nu}{2}\right)}{\left(\pi\nu\right)^{m}\Gamma\left(\frac{\nu}{2}\right)}$ 
& $\nu{=}1$: Cauchy\\
${CT}_{\nu,m}({\mu},{\Gamma},J)$ & & & $\nu{\to}\infty$: Gaussian \\
\hline
{Complex generalized Gaussian} 
& $\exp\left(-\frac{t^s}{b}\right)$ 
& $\frac{s\Gamma(m)b^{-m/s}}{\pi^{m}\Gamma(m/s)}$ 
& $s{=}1$: Gaussian\\ ${CGG}_s({\mu},{\Gamma},J)$ & & & $s{=}\frac{1}{2}$: Laplace \\
\hline
{Complex $W$-distribution} 
& $t^{s-1}\exp\left(-\frac{t^{s}}{b}\right)$ 
& $\frac{s\Gamma(m)b^{-\frac{s+m-1}{s}}}{\pi^{m}\Gamma\left(\frac{s+m-1}{s}\right)}$ 
& $s{=}1$: Gaussian\\
${CW}_{s}({\mu},{\Gamma},J)$ & & &\\
\hline
{Complex $K$-distibution} 
& $t^{\frac{\nu-m}{2}}K_{\nu-m}(2\sqrt{\nu t})$ 
& $\cfrac{2\nu^{(\nu+m)/2}}{\Gamma(\nu)\pi^{m}}$ 
& $\nu{\to}\infty$: Gaussian\\
$CK_{\nu}({\mu},{\Gamma},J)$ & & &\\
\hline
\end{tabular}
\end{table}
In Table~\eqref{CES}, we summarize representative subclasses of the CES family obtained from different choices of the density generator \(\psi(t)\). The first column lists the distribution name, the second gives its density function, the third provides the corresponding normalizing constant, and the last column identifies notable special cases. Here, \(\nu>0\) denotes the degrees of freedom, \(s>0\) the exponent, and \(b>0\) the scale parameter.

\begin{theorem}\label{theorem CES}
Let $\Phi^{-1}$ denote the inverse CDF of the standard CES distribution.  
The unified probabilistic constraint
\[
\mathbb{P}\!\left[ \operatorname{Re}(Mz)\le m\right]\ge p
\]
is equivalent to the deterministic constraint
\begin{align}
 \operatorname{Re}(\mu_M z) + \Phi^{-1}(p) \sqrt{\tfrac12\!\left(z^H\Gamma_M z+ \operatorname{Re}(z^T J_M z)\right)}\le m.
\end{align}
If $M$ is proper, then $J_M=0$, and the constraint reduces to the SOCP
\[
\tfrac{\Phi^{-1}(p)}{\sqrt{2}}\;\big\|\Gamma_M^{1/2}z\big\|\le m- \operatorname{Re}(\mu_M z).
\]
\end{theorem}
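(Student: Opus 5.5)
The plan is to reduce the complex chance constraint to a scalar real one and then standardize. Fix $z$ and set $\xi:=\operatorname{Re}(Mz)$. Writing $\varphi_1$ and the identity $\operatorname{Re}(a^Hb)=\varphi_1(a)^T\varphi_1(b)$, we get $\xi=\varphi_1(M^H)^T\varphi_1(z)$. This is a real linear functional of the real $2n$-vector $\varphi_1(M)$, with coefficients depending only on $z$.

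The first step is to show that $\xi$ is a real elliptically symmetric scalar with the same generator $\psi$. I would evaluate the characteristic function \eqref{eq:CES} along the direction determined by $z$: for $t\in\mathbb{R}$,
\[
\mathbb{E}[e^{it\xi}]=\exp\bigl(it\operatorname{Re}(\mu_M z)\bigr)\,\psi\bigl(t^2\,\sigma^2(z)\bigr),\qquad \sigma^2(z)=\tfrac12\bigl(z^H\Gamma_M z+\operatorname{Re}(z^TJ_Mz)\bigr).
\]
This is read off by substituting the appropriate complex argument (essentially $t\bar z$, up to the conjugation convention) into \eqref{eq:CES}. The quadratic form inside $\psi$ then reduces to the variance expression of the preceding lemma, and the factor $\tfrac12$ is absorbed by the normalization convention defining the ``standard'' CES law. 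The preceding lemma also identifies the mean and variance of $\xi$ directly, which I would use to pin down constants.

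Consequently, whenever $\sigma(z)>0$, the variable $\eta=(\xi-\operatorname{Re}(\mu_Mz))/\sigma(z)$ has characteristic function $\psi(t^2)$. This does not depend on $z$, so $\eta$ follows the standard (univariate) CES law with CDF $\Phi$. The constraint therefore becomes
\[
\mathbb{P}[\xi\le m]=\Phi\!\left(\frac{m-\operatorname{Re}(\mu_Mz)}{\sigma(z)}\right)\ge p .
\]
Because $\Phi$ is continuous and nondecreasing, with $\Phi^{-1}$ taken as the generalized quantile, this is equivalent to $\operatorname{Re}(\mu_Mz)+\Phi^{-1}(p)\sigma(z)\le m$. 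The degenerate case $\sigma(z)=0$ needs separate treatment: $\xi$ is then a.s. constant, and the equivalence holds for $p\in(0,1)$ since both sides reduce to $\operatorname{Re}(\mu_Mz)\le m$.

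For the proper case, set $J_M=0$. Then $z^H\Gamma_Mz=\|\Gamma_M^{1/2}z\|^2$ using the Hermitian PSD square root, which gives the stated SOCP form. Convexity requires $\Phi^{-1}(p)\ge0$, i.e.\ $p\ge 1/2$, by the symmetry of $\Phi$, and I would note this assumption.

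The main obstacle is the first step: carefully matching the conjugation conventions in \eqref{eq:CES} so that the projected characteristic function comes out exactly as $\psi(t^2\sigma^2(z))$ with the paper's $\tfrac12$ scaling and $\operatorname{Re}(z^TJ_Mz)$ term. I would handle this by passing through $\varphi_1$, where a CES vector is a real elliptical vector with a $2n\times2n$ scatter matrix. The scalar case then follows from the standard fact that linear projections of elliptical vectors are elliptical with the same generator.
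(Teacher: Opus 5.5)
Your proof is correct. The paper gives no argument of its own here: it only cites the authors' earlier work, which by the paper's own description treats the complex normal case. There the same reduction applies, because $\operatorname{Re}(Mz)$ is a real Gaussian with the mean and variance of the preceding lemma, so one standardizes and inverts $\Phi$. Your characteristic-function step is what that Gaussian argument does not supply. Substituting $w=t\bar z$ into \eqref{eq:CES} shows, for an arbitrary generator $\psi$ and without moment assumptions, that the standardized variable $\eta$ has a law independent of $z$. That is exactly what the CES statement needs, including the Cauchy and Student-$t$ members used later in the paper. Your treatment of the generalized quantile, the degenerate case $\sigma(z)=0$, and the requirement $p\ge 1/2$ for the SOCP form is also right.

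Two details to settle when writing it out. First, the conjugation issue you flag is resolved as follows. View $M$ as a column vector with $\Gamma_M=\mathbb{E}[(M-\mu_M)(M-\mu_M)^H]$, as in the paper's covariance definition. Then the argument of $\psi$ at $w=t\bar z$ is $t^2\bigl(z^T\Gamma_M\bar z+\operatorname{Re}(z^TJ_Mz)\bigr)=t^2\bigl(z^H\overline{\Gamma_M}z+\operatorname{Re}(z^TJ_Mz)\bigr)$. So the stated form with $z^H\Gamma_Mz$ holds when $\Gamma_M$ and $J_M$ are read as $\mathbb{E}[(M-\mu_M)^H(M-\mu_M)]$ and $\mathbb{E}[(M-\mu_M)^T(M-\mu_M)]$ for the row vector $M$. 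This is also the convention under which the paper's variance lemma is correct.

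Second, that argument equals $2t^2\sigma^2(z)$, so $\Phi$ must be defined as the CDF of the law with characteristic function $\psi(2t^2)$. Do not use the variance lemma to fix this constant: for the Cauchy and $\nu\le 2$ Student-$t$ generators no variance exists, and $\Gamma_M$ is only a scatter matrix.
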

\begin{proof}
The proof appears in \cite{10.1007/978-3-032-13589-6_21}.
\end{proof}
\subsection{Distributionally Robust 3CP}
In contrast to the previous section, the exact probability distribution of the random data is not assumed to be known; instead, only partial information is available. By {distributional robustness}, we mean that the chance constraint $\mathbb{P}\big[\operatorname{Re}(Mz) \le m\big] \ge p$, is required to hold uniformly over an entire family $\mathcal{I}(\mu_M,\Gamma_M,J_M)$ of probability distributions for the data $M$. Equivalently, we consider enforcing the worst-case (distributionally robust) equivalent constraint
\begin{align}
    \inf_{M \sim \mathcal{I}(\mu_M,\Gamma_M,J_M)} 
    \mathbb{P}\big[\operatorname{Re}(Mz) \le m\big] 
    \ge p,
    \label{eq:DRCCP}
\end{align}
where $M\sim \mathcal{I}(\mu_M,\Gamma_M,J_M)$ indicates the ambiguity set, that is the distribution of $M$ belongs to the family $\mathcal{I}$. 

\subsubsection{Ambiguity Set with Known First Two Order Moments}\label{subsec: 5.1}
The first problem that we consider is one where the family $\mathcal{I}(\mu_B,\Gamma_B, J_B)$ is composed of all distributions having given mean $\mu_{B}$, covariance $\Gamma_{B}$ and pseudo-covariance $J_{B}$, i.e., for $M\in\{B,-D\}$, the ambiguity set 
\[
\mathcal{I}(\mu_M,\Gamma_M,J_M)
=\left\{F \;\middle|\;
\begin{aligned}
&\mathbb{E}_{F}[M] = \mu_M,\\[2pt]
&\mathbb{E}_{F}\!\big[(M-\mu_M)(M-\mu_M)^{H}\big] = \Gamma_M,\\[2pt]
&\mathbb{E}_{F}\!\big[(M-\mu_M)(M-\mu_M)^{T}\big] = J_M
\end{aligned}
\right\}.
\]
Where $F$ is a probability distribution of $M$, and $\mathbb{E}_F$ is the expectation operator associated with $F$. 

\begin{theorem}\label{thm:known}
For any $p\in (0,1)$, the distributionally robust 3CP $\inf_{M\sim \mathcal{I}(\mu_{M},\Gamma_{M}, J_{M})}\mathbb{P}[\operatorname{Re}(Mz)\le m]\ge p,$ is equivalent to the following constraint:
\begin{align}
    \operatorname{Re}(\mu_{M}z)+\sqrt{\frac{p}{1-p}}\sqrt{\frac{1}{2}(z^H\Gamma_{M}z+\operatorname{Re}(z^TJ_{M}z))}\le m,\label{known mean and var}
\end{align}
This constraint is a convex second-order cone constraint.
\end{theorem}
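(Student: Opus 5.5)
The plan is to reduce the complex constraint to a scalar real one-sided Chebyshev (Cantelli) problem, using the isomorphism $\varphi_1$ and the moment lemma for $\operatorname{Re}(Mz)$ stated above.

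\textbf{Step 1 (reduction to a scalar).} For fixed $z$, write $\xi:=\operatorname{Re}(Mz)$. By the preceding lemma, every $F\in\mathcal{I}(\mu_M,\Gamma_M,J_M)$ induces a real random variable $\xi$ with mean $\mu_\xi=\operatorname{Re}(\mu_M z)$ and variance $\sigma_\xi^2=\tfrac12\big(z^H\Gamma_M z+\operatorname{Re}(z^TJ_Mz)\big)$. These two moments depend only on $(\mu_M,\Gamma_M,J_M)$. I would also show the converse: every real distribution with this mean and variance arises from some $F$ in the ambiguity set. To do this, pass to $\varphi_1(M^T)\in\mathbb{R}^{2n}$, whose mean and real covariance $\Sigma$ are fixed bijectively by $(\mu_M,\Gamma_M,J_M)$. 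Then $\xi=w^T\varphi_1(M^T)$ for the real vector $w=\varphi_1$-image determined by $z$, so $\sigma_\xi^2=w^T\Sigma w$. If $\sigma_\xi>0$, take any scalar $\zeta$ with mean $0$ and variance $1$ and set
$$
X=\mu+\Sigma w\,\zeta/\sigma_\xi+Y,
$$
where $Y$ is a zero-mean random vector independent of $\zeta$ with covariance $\Sigma-\Sigma ww^T\Sigma/\sigma_\xi^2\succeq 0$ and satisfying $w^TY=0$ (for example, Gaussian on the orthogonal complement). This $X$ has the prescribed moments, and $w^TX=\mu_\xi+\sigma_\xi\zeta$. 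Hence
$$
\inf_{F\in\mathcal{I}}\mathbb{P}[\xi\le m]=\inf\{\mathbb{P}[\xi\le m]:\ \mathbb{E}\xi=\mu_\xi,\ \operatorname{Var}\xi=\sigma_\xi^2\}.
$$

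\textbf{Step 2 (Cantelli bound and its tightness).} For a real scalar with known mean and variance, the worst-case probability is $\sigma^2/(\sigma^2+(m-\mu)^2)$ of violation when $m\ge\mu$, and $0$ of satisfaction when $m<\mu$. The upper bound comes from the one-sided Chebyshev inequality. Tightness comes from a two-point distribution: put mass $\sigma^2/(\sigma^2+t^2)$ at $m+\epsilon$-type points, with $t=m-\mu$, and let the mass approach $m$. The infimum is approached but possibly not attained, which is harmless because the constraint is a $\ge$ on an infimum. So the worst-case satisfaction probability is $t^2/(\sigma^2+t^2)$ for $t\ge0$. The condition $t^2/(\sigma^2+t^2)\ge p$ with $t\ge 0$ is equivalent to $t\ge\sqrt{p/(1-p)}\,\sigma$, which is exactly \eqref{known mean and var}. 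If $t<0$, the probability can be pushed to $0<p$, and the inequality correctly fails. The degenerate case $\sigma_\xi=0$ reduces to the deterministic condition $\mu_\xi\le m$, which matches the formula.

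\textbf{Step 3 (convexity).} Write the variance as a real quadratic form $\tfrac12\,\varphi_1(z)^T Q\,\varphi_1(z)$ with $Q\succeq0$. The matrix $Q$ is proportional to the real covariance of $\varphi_1(M^T)$ pulled through the isomorphism, so it is PSD because it is a covariance. Then
$$
\sqrt{\tfrac12\,\varphi_1(z)^T Q\,\varphi_1(z)}=\big\|\tfrac{1}{\sqrt2}Q^{1/2}\varphi_1(z)\big\|,
$$
and \eqref{known mean and var} becomes a standard SOC constraint. By Lemmas~\ref{lemm:convex-image}--\ref{lemm:convexity-preservation}, it defines a convex set in $\mathbb{C}^n$.

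\textbf{Main obstacle.} The hard part is Step 1's surjectivity argument: showing that the complex ambiguity set is rich enough to realize the extremal two-point scalar law. Working with the full real covariance of $\varphi_1(M^T)$ handles this cleanly.
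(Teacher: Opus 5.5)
Your proof is correct and follows the paper's backbone: reduce to the scalar $\xi=\operatorname{Re}(Mz)$, whose mean is $\operatorname{Re}(\mu_M z)$ and variance is $\tfrac12(z^H\Gamma_M z+\operatorname{Re}(z^TJ_Mz))$, then apply Cantelli's inequality. The difference is in completeness. The paper's proof shows only that the displayed constraint is \emph{sufficient}: it says ``it suffices to require'' the Cantelli bound to be at most $1-p$. So it gives a safe inner approximation, but the ``only if'' half of the claimed equivalence is never argued. The paper also has two smaller gaps:
\begin{itemize}
\item it asserts that $\mathbb{P}[\operatorname{Re}(Mz)\le m]=0$ when $m<\mu_{\operatorname{Re}(Mz)}$, which is false for a fixed distribution (only the infimum over the ambiguity set vanishes);
\item it states the second-order cone claim without argument.
\end{itemize}
Your Step~1 lifting $X=\mu+\Sigma w\,\zeta/\sigma_\xi+Y$ shows that every scalar law with the prescribed two moments is realized inside the complex ambiguity set. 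Combined with the perturbed two-point law in Step~2, this supplies exactly the missing necessity direction. Your treatment of the $t<0$ and $\sigma_\xi=0$ cases closes the remaining gaps. So does the congruence $Q=2\,\mathrm{diag}(I,-I)\,\Sigma\,\mathrm{diag}(I,-I)$, which follows from $w=(z_R,-z_I)$ and matches the paper's later $K_1$.

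Two small wording fixes:
\begin{itemize}
\item In Step~2 it is the upper atom $m+\epsilon$ that tends to $m$, not ``the mass''. That atom carries mass $\sigma^2/(\sigma^2+(t+\epsilon)^2)$, and the lower atom sits at $\mu-\sigma^2/(t+\epsilon)$.
\item In Step~3, Lemmas~\ref{lemm:convex-image}--\ref{lemm:convexity-preservation} go from $\mathbb{C}^n$ to $\mathbb{R}^{2n}$, the wrong direction for your use. Convexity in $\mathbb{C}^n$ follows directly instead, because the set is the preimage of a convex set under the real-linear bijection $\varphi_1$.
\end{itemize}
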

\begin{proof}
We first rewrite the chance constraint using complements:
\[
\mathbb{P}\!\left( \operatorname{Re}(Mz)\le m\right)\ge p
\;\Longleftrightarrow\;
\mathbb{P}\!\left( \operatorname{Re}(Mz)\ge m\right)\le 1-p
\;\Longleftrightarrow\;
\mathbb{P}\!\left( \operatorname{Re}(Mz)-\mu_{ \operatorname{Re}(Mz)}
\;\ge\; m-\mu_{ \operatorname{Re}(Mz)}\right)
\le 1-p.
\]
If $m<\mu_{ \operatorname{Re}(Mz)}$, then 
$\mathbb{P}[ \operatorname{Re}(Mz)\le m]=0$, which makes the constraint infeasible. Define $t := m-\mu_{ \operatorname{Re}(Mz)} \;\ge\; 0$. By Cantelli's (one-sided Chebyshev) inequality,
\[
\mathbb{P}\!\left( \operatorname{Re}(Mz)-\mu_{ \operatorname{Re}(Mz)}\ge t\right)\;\le\;\frac{\Gamma_{ \operatorname{Re}(Mz)}}{\Gamma_{ \operatorname{Re}(Mz)}+t^{2}}.
\]
To ensure the chance constraint holds for all distributions with the same mean 
and variance, it suffices to require $\frac{\Gamma_{ \operatorname{Re}(Mz)}}{\Gamma_{ \operatorname{Re}(Mz)}+t^{2}}\;\le\; 1-p$. Solving for $t$ yields $t \;\ge\;\sqrt{\Gamma_{ \operatorname{Re}(Mz)}}\sqrt{\frac{p}{1-p}}$. Since $t=m-\mu_{ \operatorname{Re}(Mz)}$, we obtain the equivalent deterministic constraint
\[\mu_{ \operatorname{Re}(Mz)}+\sqrt{\Gamma_{\operatorname{Re}(Mz)}}\sqrt{\frac{p}{1-p}}\;\le\; m,\]
which completes the proof.
\end{proof}

\subsubsection{Ambiguity Set with Unknown Second Order Moment}
In this section, we consider the ambiguity set which accounts for the information about the mean vector $\mu_M$ for the vector $M$, and an upper bound on the covariance matrix of the pair $(M_R,M_I)$, $L_M\succeq 0$, with $M=M_R+iM_I$. Define the ambiguity set as 
\begin{align}
\mathcal{I}(\mu_M,L_M)
=\left\{F \;\middle|\;
\begin{aligned}
&\mathbb{E}_{F}[M] = \mu_M,\\[2pt]
&
\begin{bmatrix}
    \Gamma_{M_R} & \Gamma_{M_R,M_I}\\
    \Gamma_{M_I,M_R} & \Gamma_{M_I}
\end{bmatrix}
\preceq 
\begin{bmatrix}
    \hat{\Gamma}_{M_R} & \hat{\Gamma}_{M_R,M_I}\\
    \hat{\Gamma}_{M_I,M_R} & \hat{\Gamma}_{M_I}
\end{bmatrix}=L_M
\end{aligned}
\right\}.
\end{align}
In this case, we define
\[\hat{\Gamma}_M=\hat{\Gamma}_{M_R}+\hat{\Gamma}_{M_I}+i\left(\hat{\Gamma}_{M_I,M_R}-\hat{\Gamma}_{M_R,M_I}\right)\]
\[\hat{J}_M=\hat{\Gamma}_{M_R}-\hat{\Gamma}_{M_I}+i\left(\hat{\Gamma}_{M_I,M_R}+\hat{\Gamma}_{M_R,M_I}\right)\]
\begin{theorem}
For any $p\in (0,1)$, the distributionally robust 3CP $\inf_{M\sim \mathcal{I}(\mu_{M},\hat{\Gamma}_{M}, \hat{J}_{M})}\mathbb{P}[\operatorname{Re}(Mz)\le m]\ge p$, is equivalent to the following constraint:
\begin{align}
    \operatorname{Re}(\mu_{M}z)+\sqrt{\frac{p}{1-p}}\sqrt{\frac{1}{2}(z^H\hat{\Gamma}_{M}z+\operatorname{Re}(z^T\hat{J}_{M}z))}\le m,\label{unknown var}
\end{align}
\end{theorem}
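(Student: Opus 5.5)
The plan is to reduce this theorem to Theorem~\ref{thm:known} by a two-level worst-case argument. The infimum over $\mathcal{I}(\mu_M,L_M)$ is taken in two stages: first over all distributions with a \emph{fixed} real covariance block matrix $\Sigma:=\begin{bmatrix}\Gamma_{M_R}&\Gamma_{M_R,M_I}\\ \Gamma_{M_I,M_R}&\Gamma_{M_I}\end{bmatrix}\preceq L_M$, and then over all such $\Sigma$. For fixed $\Sigma$, the induced complex pair $(\Gamma_M,J_M)$ is determined by Definition~\ref{Def: 1}. The inner infimum is therefore exactly the known-moments setting of Subsection~\ref{subsec: 5.1}. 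By Theorem~\ref{thm:known}, for a fixed $\Sigma$ the robust constraint is equivalent to
\[
\operatorname{Re}(\mu_M z)+\sqrt{\tfrac{p}{1-p}}\sqrt{\sigma^2(\Sigma,z)}\le m,\qquad
\sigma^2(\Sigma,z)=\tfrac12\big(z^H\Gamma_M z+\operatorname{Re}(z^TJ_Mz)\big).
\]
The whole constraint is thus equivalent to requiring this inequality for every admissible $\Sigma$, that is, to
\[
\operatorname{Re}(\mu_M z)+\sqrt{\tfrac{p}{1-p}}\sup_{0\preceq\Sigma\preceq L_M}\sqrt{\sigma^2(\Sigma,z)}\le m .
\]

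The key step is to write $\sigma^2(\Sigma,z)$ as a real quadratic form in $\Sigma$. Writing $\operatorname{Re}(Mz)=M_R z_R - M_I z_I=w^T\varphi_1(M)$ with $w:=(z_R,\,-z_I)\in\mathbb{R}^{2n}$, the variance of this real scalar is $w^T\Sigma w$. The earlier lemma of \cite{10.1007/978-3-032-13589-6_21} gives the identity
\[
w^T\Sigma w=\tfrac12\big(z^H\Gamma_M z+\operatorname{Re}(z^TJ_Mz)\big).
\]
I would double-check this directly by expanding $\Gamma_M$ and $J_M$ from Definition~\ref{Def: 1}: the terms $\Gamma_{M_R}+\Gamma_{M_I}$ and $\Gamma_{M_R}-\Gamma_{M_I}$ combine to $2(z_R^T\Gamma_{M_R}z_R+z_I^T\Gamma_{M_I}z_I)$, and the cross terms produce the $-2z_R^T\Gamma_{M_R,M_I}z_I$ part, with the sign and transpose conventions handled carefully. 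Since $\Sigma\preceq L_M$ implies $w^T\Sigma w\le w^TL_Mw$, the supremum is attained at $\Sigma=L_M$. The same identity applied to $L_M$ then gives $w^TL_Mw=\tfrac12(z^H\hat\Gamma_M z+\operatorname{Re}(z^T\hat J_M z))$, because $\hat\Gamma_M$ and $\hat J_M$ are built from the blocks of $L_M$ by exactly the same formulas. This yields \eqref{unknown var}.

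Two points need care, and the first is the step I expect to be the main obstacle.
\begin{itemize}
\item \textbf{Attainability of the worst case.} The supremum over $\Sigma$ must actually be attained inside the ambiguity set, so that the robust constraint is equivalent to \eqref{unknown var} rather than merely implied by it. Since $L_M\succeq0$, some distribution has mean $\mu_M$ and covariance exactly $L_M$, so $\Sigma=L_M$ is feasible. For the converse (necessity) direction, I will rely on the tightness of Cantelli's bound already used in Theorem~\ref{thm:known}: for each admissible $\Sigma$ there is a two-point distribution attaining it, and taking $\Sigma=L_M$ shows that violating \eqref{unknown var} violates the robust chance constraint.
\item \textbf{Convexity.} The left-hand side of \eqref{unknown var} equals $\operatorname{Re}(\mu_Mz)+\sqrt{p/(1-p)}\,\|L_M^{1/2}w\|$, a norm of a linear function of $\varphi_1(z)$. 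This shows the constraint is a convex second-order cone constraint, as in Theorem~\ref{thm:known}.
\end{itemize}
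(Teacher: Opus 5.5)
Your proposal is correct and follows the same route as the paper: the worst case over the ambiguity set is attained at covariance $L_M$ because $\Sigma\preceq L_M$ implies $w^T\Sigma w\le w^TL_Mw$, after which Theorem~\ref{thm:known} yields the constraint. Your version is more careful than the paper's in three respects: you make the two-stage infimum and the feasibility of $\Sigma=L_M$ explicit, and your vector $w=(z_R,-z_I)$ corrects a harmless sign slip in the paper's $\tilde z=[z_R~z_I]^T$; one thing to tidy is that when $\operatorname{rank}\Sigma>1$, only the scalar $\operatorname{Re}(Mz)$, not $\varphi_1(M)$ itself, can be two-point in the tightness construction.
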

\begin{proof}
    Since $\sqrt{\frac{1}{2}(z^H\hat{\Gamma}_{M}z+\operatorname{Re}(z^T\hat{J}_{M}z))}=\sqrt{\tilde{z}^TL_M\tilde{z}}\geq \sqrt{\tilde{z}\Sigma_M\tilde{z}}$, where $\tilde{z}=[z_R~z_I]^T,\Sigma_M$ is the true covariance of the pair $(M_R,M_I)$, then the worst-case probability is attained when the covariance equal to maximum allowed covariance $L_M=\Sigma_M$. Then we follow Theorem \eqref{thm:known} and this concludes the proof.
\end{proof}
\subsubsection{Ambiguity Set with Unknown Moments} 
We consider the case where the mean vector of $M$ lies in an ellipsoid of size $\zeta_{M}\ge0$ centered at $\mu_M$, and the covariance of $M=M_R+iM_I$ satisfies $\mathrm{Cov}[(M_R,M_I)]\preceq L_M$ for some PSD matrix $L_M$. The ambiguity set is
\begin{align}
\mathcal{I}(\mu_M,L_M)
=\left\{F \;\middle|\;
\begin{aligned}
&(\mathbb{E}[M]-\mu_M)^H\hat{\Gamma}_M^{-1}(\mathbb{E}[M]-\mu_M)\le\zeta_{M}\\
&
\begin{bmatrix}
    \Gamma_{M_R} & \Gamma_{M_R,M_I}\\
    \Gamma_{M_I,M_R} & \Gamma_{M_I}
\end{bmatrix}
\preceq 
\begin{bmatrix}
    \hat{\Gamma}_{M_R} & \hat{\Gamma}_{M_R,M_I}\\
    \hat{\Gamma}_{M_I,M_R} & \hat{\Gamma}_{M_I}
\end{bmatrix}=L_M
\end{aligned}
\right\}.
\end{align}
where 
\[\hat{\Gamma}_M=\hat{\Gamma}_{M_R}+\hat{\Gamma}_{M_I}+i(\hat{\Gamma}_{M_I,M_R}-\hat{\Gamma}_{M_R,M_I})\]
\[\hat{J}_M=\hat{\Gamma}_{M_R}-\hat{\Gamma}_{M_I}+i(\hat{\Gamma}_{M_I,M_R}+\hat{\Gamma}_{M_R,M_I}).\]

\begin{theorem}
For any $p\in(0,1)$, the distributionally robust 3CP $\inf_{M\sim\mathcal{I}(\mu_M,\hat{\Gamma}_M,\hat{J}_M)}\mathbb{P}\!\left[\mathrm{Re}(Mz)\le m\right]\ge p$ is equivalent to
\[
\mathrm{Re}(\mu_M z)+\sqrt{\zeta_{M}}\sqrt{z^H\hat{\Gamma}_M z}+\sqrt{\tfrac{p}{1-p}}\sqrt{\tfrac12\!\left(z^H\hat{\Gamma}_M z+\mathrm{Re}(z^T\hat{J}_M z)\right)}\le m.
\]
If $M$ is proper, then $J_M=0$, and the constraint reduces to
\[
\left(\sqrt{\tfrac{p}{2(1-p)}}+\sqrt{\zeta_{M}}\right)\sqrt{z^H\hat{\Gamma}_M z}\le m.
\]
\end{theorem}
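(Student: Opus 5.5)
The plan is to reduce the problem to the known-moment case of Theorem~\ref{thm:known}, applied once for each admissible mean, and then take the worst case over the mean ellipsoid. I will decompose the ambiguity set as a union over admissible mean vectors $\mu$ with $(\mu-\mu_M)^H\hat\Gamma_M^{-1}(\mu-\mu_M)\le\zeta_M$ and admissible real covariances $\Sigma\preceq L_M$. Then
\[
\inf_{F\in\mathcal{I}}\mathbb{P}_F[\operatorname{Re}(Mz)\le m]=\inf_{\mu}\,\inf_{\Sigma\preceq L_M}\,\inf_{F:\,\mathbb{E}M=\mu,\ \mathrm{Cov}=\Sigma}\mathbb{P}_F[\operatorname{Re}(Mz)\le m].
\]
For fixed $(\mu,\Sigma)$, the innermost infimum is at least $p$ exactly when
\[
\operatorname{Re}(\mu z)+\sqrt{p/(1-p)}\,\sqrt{\tilde z^T\Sigma\tilde z}\le m ,
\]
by the Cantelli argument of Theorem~\ref{thm:known}. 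The variance formula of the preceding lemma gives $\Gamma_{\operatorname{Re}(Mz)}=\tilde z^T\Sigma\tilde z$ with $\tilde z=\varphi_1(z)$, up to the sign and conjugation convention for $\operatorname{Re}(Mz)$ that I will fix consistently. As in the proof of the unknown-second-moment theorem, $\tilde z^T\Sigma\tilde z$ is monotone in $\Sigma$ in the Loewner order. So the inner supremum over $\Sigma$ is attained at $\Sigma=L_M$, giving the term $\sqrt{\tfrac12(z^H\hat\Gamma_Mz+\operatorname{Re}(z^T\hat J_Mz))}$.

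It remains to compute $\sup_\mu \operatorname{Re}(\mu z)$ over the complex ellipsoid. I will write $\mu=\mu_M+\hat\Gamma_M^{1/2}w$ with $\|w\|\le\sqrt{\zeta_M}$. Here $\hat\Gamma_M$ is Hermitian positive semidefinite, and I assume it is invertible as the ambiguity set implicitly requires. This gives
\[
\operatorname{Re}(\mu z)=\operatorname{Re}(\mu_M z)+\operatorname{Re}\bigl((\hat\Gamma_M^{1/2}w)^T z\bigr).
\]
I will rewrite the second term as $\operatorname{Re}(y^H w)$ with $y=\hat\Gamma_M^{1/2}\bar z$ up to conjugation; the rows of $M$ act on $z$ without conjugation. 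The dual-norm identity of Theorem~\ref{lem: dual norm} then gives the supremum $\sqrt{\zeta_M}\,\|y\|$. Since $\bar z^H\hat\Gamma_M\bar z=\overline{z^H\hat\Gamma_M^T z}$, I expect this to equal $\sqrt{\zeta_M}\sqrt{z^H\hat\Gamma_M z}$ once the convention (mean as row vector, i.e.\ $\hat\Gamma_M$ acting on the appropriate side) is fixed.

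The main obstacle is this conjugation bookkeeping, and I will first check it carefully on the real representation via $\varphi_1,\varphi_2$. Both suprema are attained simultaneously and independently: $\mu$ and $\Sigma$ vary over a product set, and the Cantelli bound is tight, attained by a two-point distribution, for each fixed pair. This yields the equivalence rather than only a sufficient condition. The sum of the three terms is a sum of norms of linear maps of $\tilde z$ plus a linear term, hence SOC-representable.

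For the proper case I set $\hat J_M=0$. The variance term becomes $\sqrt{p/(2(1-p))}\sqrt{z^H\hat\Gamma_Mz}$, which combines with the mean term into the stated coefficient. I will note that the stated reduced form appears to drop $\operatorname{Re}(\mu_Mz)$ on the left, which is correct only if $\mu_M=0$, consistent with the zero-mean requirement in the definition of properness. I will make that explicit.
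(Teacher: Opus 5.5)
Your proposal is correct and follows essentially the same route as the paper. Both arguments write the ambiguity set as a product over admissible means in the ellipsoid and covariances $\Sigma\preceq L_M$, apply the Cantelli bound for each fixed pair, and take the two worst cases independently: $\Sigma=L_M$ by Loewner monotonicity, and the mean term maximized over the ellipsoid to give $\sqrt{\zeta_M}\sqrt{z^H\hat{\Gamma}_M z}$.

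The differences are minor. You maximize the sum $\operatorname{Re}(\mu z)+\sqrt{p/(1-p)}\sqrt{\tilde z^T\Sigma\tilde z}$ and use the dual-norm identity, whereas the paper minimizes the ratio $(m-\operatorname{Re}(\mu z))/\sqrt{\tilde z^T\Sigma\tilde z}$ and finds the worst mean via KKT; your sum form avoids the paper's implicit need for a nonnegative numerator and nonzero denominator. Your remark that the reduced proper-case inequality drops $\operatorname{Re}(\mu_M z)$, and so holds only when $\mu_M=0$, is correct and is not addressed in the paper.
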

\begin{proof}
Expanding the worst-case probability,
\[
\inf_{(\mu_M,\Sigma_M)\in\mathcal{U}}\inf_{F\in\mathcal{I}(\mu_M,\Sigma_M)}
\mathbb{P}[\mathrm{Re}(Mz)\le m]\ge p,
\]
where 
$\mathcal{U}=\{(\mu_M,\Sigma_M):(\mu_M-\hat{\mu}_M)^H\hat{\Gamma}_M^{-1}(\mu_M-\hat{\mu}_M)\le\zeta_{M},\;\Sigma_M\preceq L_M\}$.
By one-sided Chebyshev,
\[
\inf_F\mathbb{P}[\mathrm{Re}(Mz)\le m]
=1-\frac{1}{1+\frac{(m-\mathrm{Re}(\mu_Mz))^2}{\frac12(z^H\Gamma_Mz+\mathrm{Re}(z^TJ_Mz))}},
\]
which is nonzero only if $\mathrm{Re}(\mu_Mz)\le m$. Thus
\[
\inf_{(\mu_M,\Sigma_M)\in\mathcal{U}}
\frac{m-\mathrm{Re}(\mu_Mz)}
{\sqrt{\frac12(z^H\Gamma_Mz+\mathrm{Re}(z^TJ_Mz))}}
\ge \sqrt{\tfrac{p}{1-p}}.
\]
Define
\[
h(z)=\min_{\mu_M,\Sigma_M}
\frac{m-\mathrm{Re}(\mu_Mz)}
{\sqrt{\frac12(z^H\Gamma_Mz+\mathrm{Re}(z^TJ_Mz))}},
\quad\text{s.t. }\;
(\mu_M-\hat{\mu}_M)^H\hat{\Gamma}_M^{-1}(\mu_M-\hat{\mu}_M)\le\zeta_{M},\;\Sigma_M\preceq L_M.
\]
The numerator depends only on $\mu_M$ and denominator only on $\Sigma_M$. Thus $h(z)=(m+g_1(z))/\sqrt{g_2(z)}$ with
\[
g_1(z)=\min_{\mu_M}-\mathrm{Re}(\mu_Mz)\;\text{s.t.}\;(\mu_M-\hat{\mu}_M)^H\hat{\Gamma}_M^{-1}(\mu_M-\hat{\mu}_M)\le\zeta_{M},
\]
\[
g_2(z)=\max_{\Sigma_M}\tilde{z}^T\Sigma_M\tilde{z}\;\text{s.t.}\;\Sigma_M\preceq L_M.
\]
KKT gives 
$\mu_M^\star=\hat{\mu}_M+\frac{\sqrt{\zeta_{M}}\hat{\Gamma}_Mz}{\sqrt{z^H\hat{\Gamma}_Mz}}$,  
so 
$g_1(z)=-\mathrm{Re}(\hat{\mu}_Mz)-\sqrt{\zeta_{M}}\sqrt{z^H\hat{\Gamma}_Mz}$.
Since the worst-case covariance is $\Sigma_M=L_M$,
\[
g_2(z)=\tilde{z}^TL_M\tilde{z}
=\tfrac12\!\left(z^H\hat{\Gamma}_Mz+\mathrm{Re}(z^T\hat{J}_Mz)\right).
\]
Substituting $g_1,g_2$ yields
\[
\mathrm{Re}(\mu_Mz)
+\sqrt{\zeta_{M}}\sqrt{z^H\hat{\Gamma}_Mz}
+\sqrt{\tfrac{p}{1-p}}
\sqrt{\tfrac12(z^H\hat{\Gamma}_Mz+\mathrm{Re}(z^T\hat{J}_Mz))}
\le m.
\]
\end{proof}
\subsection{Characterization of the Saddle Point}
\begin{table}
\centering
\small
\caption{Summary of $k_p$ and valid $p$-ranges for different distributional settings.}
\label{tab: all}
\begin{tabular}{|c|c|c|}
\hline
\textbf{Case} & \textbf{$p$-range} & \textbf{$k_p$} \\[3pt]
\hline
{CES distribution} 
& $[0.5,1)$ 
& $\Phi^{-1} \left(p\right)$ \\
\hline
Known $(\mu,\Gamma,J)$
& $(0,1)$ 
& $ \sqrt{\cfrac{p}{1-p}}$ \\
\hline
Unknown second-order moment
& $(0,1)$ 
& $\sqrt{\cfrac{p}{1-p}}$ \\
\hline
Unknown moments (proper case)
& $(0,1)$ 
& $ \sqrt{\cfrac{p}{1-p}}\sqrt{\zeta_2}+\sqrt{\zeta_1}$ \\
\hline
\end{tabular}
\end{table}
A pair $(u^\star,v^\star)\in S_1(p_1)\times S_2(p_2)$ is a {saddle-point equilibrium} if
\begin{align}
\operatorname{Re}(u^HAv^\star)\le \operatorname{Re}(u^{\star H}Av^\star)\le \operatorname{Re}(u^{\star H}Av),\qquad \forall (u,v)\in S_1(p_1)\times S_2(p_2).
\end{align}
with the following equivalent feasible sets
\begin{align}
S_1(p_1)=&\left\{u\in U\;\bigg|\; \operatorname{Re}(\mu_{B}u)+k_{p_1}
\sqrt{\tfrac{1}{2}\big(u^H\Gamma_{B}u+\operatorname{Re}(u^TJ_{B}u)\big)}\le b\right\},\label{eq:P-det}\\[3pt]
S_2(p_2)=&\left\{v\in V\;\bigg|\; \operatorname{Re}(\mu_{D}v)-k_{p_2}
\sqrt{\tfrac{1}{2}\big(v^H\Gamma_{D}v+\operatorname{Re}(v^TJ_{D}v)\big)}\ge d\right\}.\label{eq:D-det}
\end{align}
Where $k_p$ is defined from the table \ref{tab: all}. If $B$ is proper, as well as $D$, then the feasible sets simplify to
\begin{align}
S_1(p_1)=\Big\{u\in U\;\bigg|\;
\tfrac{k_{p_{1}}}{\sqrt{2}}
\big\|\Gamma_{B}^{1/2}u\big\|\le b\Big\},\quad 
S_2(p_2) =\Big\{v\in V\;\bigg|\;
\tfrac{k_{p_{2}}}{\sqrt{2}}
\big\|\Gamma_{D}^{1/2}v\big\|\le -d\Big\}.
\end{align}
To establish the existence of equilibrium in the complex game under 3CP, we first impose a standard regularity assumption ensuring the strict feasibility of the players’ strategy sets.

\begin{theorem}[Existence of the saddle point]\label{thm: existence}
Let Assumption~1 hold. Then the zero-sum game $G(p)$ admits a saddle-point equilibrium for any 
$p=(p_1,p_2)$ whose components lie in the ranges summarized in Table~\ref{tab: all}.
\end{theorem}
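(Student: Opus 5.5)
The plan is to reduce the statement to the complex minimax theorem, Theorem~\ref{Th: minimax}, applied to $f(u,v)=\operatorname{Re}(u^HAv)$ on $S_1(p_1)\times S_2(p_2)$. Three things must be checked: nonemptiness, compactness, and convexity of the feasible sets, and continuity together with concavity–convexity of $f$. The payoff part is immediate. $f$ is real-bilinear in $(\varphi_1(u),\varphi_1(v))$, since $\operatorname{Re}(u^HAv)=\varphi_1(u)^T\varphi_2(A)\varphi_1(v)$. Hence it is continuous, linear (so concave) in $u$, and linear (so convex) in $v$.

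For the sets, I would use the deterministic reformulations \eqref{eq:P-det}–\eqref{eq:D-det}. These hold by Theorem~\ref{theorem CES}, Theorem~\ref{thm:known}, and the two subsequent DRO theorems, for $p$ in the ranges of Table~\ref{tab: all}. The key observation is that
\[
\tfrac12\big(z^H\Gamma_M z+\operatorname{Re}(z^TJ_Mz)\big)=\tilde z^T\Sigma_M\tilde z, \qquad \tilde z=\varphi_1(z).
\]
Here $\Sigma_M$ is the real covariance of $(M_R,M_I)$, or $L_M$ in the ambiguous cases, and it is positive semidefinite. So the square-root term equals $\|\Sigma_M^{1/2}\tilde z\|$, a seminorm and hence convex. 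In the unknown-mean case, the additional term $\sqrt{\zeta_M}\sqrt{z^H\hat\Gamma_Mz}=\sqrt{\zeta_M}\,\|\hat\Gamma_M^{1/2}z\|$ is convex as well.

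The step where the $p$-ranges matter is the sign of $k_p$. Each constraint has the form (affine) $+\,k_p\cdot$(convex) $\le$ const, which is a convex second-order cone constraint only when $k_p\ge0$. For CES this is exactly why $p\ge 1/2$ is needed, since then $\Phi^{-1}(p)\ge0$ by symmetry of the standard CES law. In the Chebyshev cases, $k_p=\sqrt{p/(1-p)}\ge 0$ for all $p\in(0,1)$. For player~2, the tuple $(-D,-d)$ turns the $\ge d$ constraint into a constraint of the same $\le$ form, so it is also convex. Each set is then the intersection of $U$ (or $V$) with a closed convex set, since the constraint functions are continuous. $U$ and $V$ are compact: they are closed (norm ball, half-spaces, affine set) and bounded by $\alpha_1,\alpha_2$. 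Therefore $S_1(p_1)$ and $S_2(p_2)$ are compact and convex, and Lemmas~\ref{lemm:convex-image} and \ref{lemm:compact-image} transport these properties to $\mathbb{R}^{2n}$ and $\mathbb{R}^{2m}$ if needed.

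The main obstacle is nonemptiness. Assumption~1 was stated for the deterministic sets $S_1,S_2$, not for the chance-constrained ones. I would interpret Assumption~1 in this section as strict feasibility of the deterministic reformulations \eqref{eq:P-det}–\eqref{eq:D-det}, which is the natural Slater condition for the resulting SOCPs. This yields nonemptiness directly, and with it Theorem~\ref{Th: minimax} gives a saddle point $(u^\star,v^\star)\in S_1(p_1)\times S_2(p_2)$ with $\operatorname{Re}(u^HAv^\star)\le\operatorname{Re}(u^{\star H}Av^\star)\le\operatorname{Re}(u^{\star H}Av)$. Strict feasibility is not needed for existence itself; it will matter later for strong duality in the SOCP characterization of the equilibrium.
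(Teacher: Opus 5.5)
Your proposal is correct and follows the paper's route: show that $S_1(p_1)$ and $S_2(p_2)$ are nonempty, convex and compact and that $\operatorname{Re}(u^HAv)$ is continuous and bilinear, then invoke the complex minimax theorem. Your additional checks ($k_p\ge 0$ on the tabulated ranges, the seminorm form of the risk term, and nonemptiness from Assumption~1 read for the reformulated sets) spell out what the paper's one-line proof leaves implicit; one harmless slip is that, with $\tilde z=\varphi_1(z)$, the quadratic form is given by the covariance of $(M_R,-M_I)$ (the paper's $K_1$), not of $(M_R,M_I)$, and that matrix is still positive semidefinite.
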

\begin{proof}
By Lemma~\eqref{lemm:convexity-preservation} and the norm caps in $U,V$, the sets $S_1(p_1)$ and $S_2(p_2)$ are convex and compact. The payoff $\operatorname{Re}(u^HAv)$ is continuous and bilinear. The minimax theorem applies, yielding the existence of a saddle point.
\end{proof}

Writing $u=u_R+iu_I, \tilde{u}=[u_R~u_I]^T$ and $v=v_R+iv_I, \tilde{v}=[v_R~v_I]^T$, and $B=B_R+iB_I$, $D=D_R+iD_I$, we obtain in the split real–imaginary form used for conic modeling:
\begin{align*}
\tfrac{1}{2} \big(u^H\Gamma_{B}u+\operatorname{Re}(u^TJ_{B}u)\big)
&=u_R^T\Gamma_{B_R}u_R-u_R^T\Gamma_{B_R,B_I}u_I-u_I^T\Gamma_{B_I,B_R}u_R+u_I^T\Gamma_{B_I}u_I,\\
\tfrac{1}{2} \big(v^H\Gamma_{D}v+\operatorname{Re}(v^TJ_{D}v)\big)
&=v_R^T\Gamma_{D_R}v_R-v_R^T\Gamma_{D_R,D_I}v_I-v_I^T\Gamma_{D_I,D_R}v_R+v_I^T\Gamma_{D_I}v_I.
\end{align*}
Let
\[
 K_{1}=
\begin{bmatrix}
 \Gamma_{B_R} & -\Gamma_{B_R,B_I} \\ -\Gamma_{B_I,B_R} & \Gamma_{B_I}   
\end{bmatrix}, \quad 
K_{2}=
\begin{bmatrix}
 \Gamma_{D_R} & -\Gamma_{D_R,D_I} \\ -\Gamma_{D_I,D_R} & \Gamma_{D_I}   
\end{bmatrix},
\]
which are positive semidefinite. We write $K_{1}=Q^TQ$ and $K_{2}=P^TP$, where 
\begin{align}
     Q&= 
    \begin{bmatrix}
        Q_{1} & Q_{2}\\
        Q_{3} & Q_{4}
    \end{bmatrix}, \quad 
    P= 
    \begin{bmatrix}
        P_{1} & P_{2}\\
        P_{3} & P_{4}
    \end{bmatrix},\label{QP}
\end{align}
with $Q_1,Q_2,Q_3,Q_4\in\mathbb{R}^{n\times n}$ and  $P_1,P_2,P_3,P_4\in\mathbb{R}^{m\times m}$. For the primal problem, set
\begin{align}
    \hat{Q}_{1}&=Q_{1}+Q_{4}+i(Q_{3}-Q_{2}),\\
    \hat{Q}_{2}&=Q_{1}-Q_{4}+i(Q_{3}+Q_{2}),\label{Q}
\end{align}
and for the dual problem, set
\begin{align}
    \hat{P}_{1}&=P_{1}+P_{4}+i(P_{3}-P_{2}), \\
    \hat{P}_{2}&=P_{1}-P_{4}+i(P_{3}+P_{2}).\label{P}
\end{align}
We consider the primal–dual game form $G(p)$ with feasible sets $S_1(p_1)$ and $S_2(p_2)$ defined in \eqref{eq:P-det}--\eqref{eq:D-det}. For fixed $v\in S_2(p_2)$, the Lagrangian of the inner maximization over $u$ is
\begin{align}
\mathcal{L}(u;v,\delta_1,\sigma,\rho_1,r_1)
&= \operatorname{Re}(u^HAv)-\operatorname{Re}\big(\rho_1(\mathbf{1}^Tu-1)\big)+r_1^T\operatorname{Re}(u) \nonumber\\
&\quad -\delta_1\Big(\operatorname{Re}(\mu_{B}u)+k_{p_1}
\sqrt{\tfrac{1}{2}\big(u^H\Gamma_{B}u+\operatorname{Re}(u^TJ_{B}u)\big)}-b\Big) 
-\sigma(\|u\|-\alpha_1),
\end{align}
where $\delta,\sigma\in\mathbb{R}_+$ and $\rho_1\in\mathbb{C}$, $r_1\in\mathbb{R}^n$. We know that
\begin{align}
-\delta k_{p_1}\sqrt{\tfrac{1}{2}\big(u^H\Gamma_{B}u+\operatorname{Re}(u^TJ_{B}u)\big)} 
= -\delta k_{p_1}\,\|Q\tilde{u}\|.
\end{align}
Using the dual representation of the norm \eqref{lem: dual norm} and setting $\tilde\lambda=[\lambda_R~\lambda_I]^T$ with $\lambda=\lambda_R+i\lambda_I\in\mathbb{C}^n$, we obtain
\begin{align}
-\delta k_{p_1} \|Q\tilde{u}\|
&=\min_{\|\tilde\lambda\|\le \delta k_{p_1}}-\tilde{\lambda}^TQ\tilde{u}\\
&=\min_{\|\tilde{\lambda}\|\le \delta k_{p_1}}-\big(\lambda_R^TQ_1u_R+\lambda_I^TQ_3u_R+\lambda_R^TQ_2u_I + \lambda_I^TQ_4u_I\big)\\
&=\min_{\|\lambda\|\le \delta k_{p_1}}-\frac{1}{2}\big(\operatorname{Re}(\lambda^H\hat{Q}_{1}u+\lambda^H\hat{Q}_{2}\bar{u})\big)\\
&=\min_{\|\lambda\|\le \delta k_{p_1}}-\frac{1}{2}\big(\operatorname{Re}((\hat{Q}_{1}^H\lambda+\hat{Q}_{2}^T\bar{\lambda})^Hu)\big).
\end{align}
Using Lemma~\eqref{lem: dual norm}, the dual of the norm is
\[
-\sigma\|u\|=\min_{\|\beta\|\le\sigma}-\operatorname{Re}(\beta^Hu).
\]
Substituting and eliminating $u$ gives the following primal SOCP in $v$:
\begin{equation}
\begin{aligned}
\min_{v,\lambda_1,\beta_1,\rho_1,r_1,\delta_1}
&\ \lambda_1^T b+\alpha_1\|\beta_1\|+\operatorname{Re}(\rho_1) \\[4pt]
\text{s.t.}\quad
& Av-\mu_{B}^H\delta_1-\tfrac{1}{2}\big(\hat{Q}_{1}^H\lambda_{1}+\hat{Q}_{2}^T\bar{\lambda}_1\big)-\beta_1+r_1-\bar{\rho}_1\mathbf{1} = 0,\\[2pt]
& \|\lambda_1\|\le\delta_1 k_{p_1}, \quad \delta_1\ge 0,\quad r_1\ge 0,\\
& \|v\|\le \alpha_2,\quad \operatorname{Re}(v)\ge 0,\quad \mathbf{1}^T v = 1,\\[2pt]
& \operatorname{Re}(\mu_{D} v)-k_{p_{2}} \sqrt{\tfrac{1}{2}\big(v^H\Gamma_{D}v+\operatorname{Re}(v^TJ_{D}v)\big)}\ge d.
\end{aligned}
\tag{P3}\label{P3}
\end{equation}
Here $v\in\mathbb{C}^m$, $\lambda_1,\beta_1\in\mathbb{C}^n$, $r_1\in\mathbb{R}^n$, and $\rho_1\in\mathbb{C}$. By symmetry (exchanging the players’ roles), we obtain the dual SOCP in $u$:
\begin{equation}
\begin{aligned}
\max_{u,\lambda_2,\beta_2,\rho_2,r_2,\delta_2}
&\ \lambda_2^T d-\alpha_2\|\beta_2\|-\operatorname{Re}(\rho_2) \\[4pt]
\text{s.t.}\quad
& A^H u-\mu_{D}^H\delta_2+\tfrac{1}{2}\big(\hat{P}_{1}^H\lambda_2+\hat{P}_{2}^T\bar{\lambda}_2\big)+\beta_2 - r_2+\bar{\rho}_2\mathbf{1}= 0,\\[2pt]
& \|\lambda_2\|\le\delta_2 k_{p_2}, \quad \delta_2\ge 0,\quad r_2\ge 0,\\
&\|u\|\le \alpha_1,\quad \operatorname{Re}(u)\ge 0,\quad \mathbf{1}^T u = 1,\\[2pt]
& \operatorname{Re}(\mu_{B} u)+k_{p_1}
   \sqrt{\tfrac{1}{2}\big(u^H\Gamma_{B}u+\operatorname{Re}(u^TJ_{B}u)\big)}
   \le b.
\end{aligned}
\tag{D3}\label{D3}
\end{equation}
Here $u\in\mathbb{C}^n$, $\lambda_2,\beta_2\in\mathbb{C}^m$, $r_2\in\mathbb{R}^m$, and $\rho_2\in\mathbb{C}$.
For the case where the moments are unknown, we set the Lagrangian variables 
$\eta_1 \in \mathbb{C}^n$ and $\eta_2 \in \mathbb{C}^m$. 
We then replace the first constraint of the primal problem~\eqref{P3} 
by the following:
\begin{align}
& Av - \mu_{B}^H \delta_1 - \hat{\Gamma}_{B}^{1/2\,H}\eta_1- \tfrac{1}{2}\big(\hat{Q}_{1}^H\lambda_{1}+\hat{Q}_{2}^T\bar{\lambda}_1\big) - \beta_1 + r_1 - \bar{\rho}_1\mathbf{1} = 0, \\[2pt]
& \|\eta_1\| \le \delta_1 \sqrt{\zeta_{B}}, \quad \delta_1 \ge 0, \quad r_1 \ge 0,
\end{align}
and we replace the first constraint of the dual problem~\eqref{D3} 
by the following:
\begin{align}
& A^H u - \mu_{D}^H \delta_2- \hat{\Gamma}_{D}^{1/2\,H} \eta_2 + \tfrac{1}{2} \big(\hat{P}_{1}^H\lambda_{2} +\hat{P}_{2}^T \bar{\lambda}_{2} \big) + \beta_2 - r_2 + \bar{\rho}_2\mathbf{1} = 0, \\[2pt]
& \|\eta_2\| \le \delta_2 \sqrt{\zeta_{D}}, \quad\delta_2 \ge 0, \quad r_2 \ge 0.
\end{align}

\begin{theorem}\label{thm:KKT-saddle}
Let Assumption~1 hold, and let $p=(p_1,p_2)$ with 
$p_1\in(0.5,1)$ and $p_2\in(0.5,1)$.  
A pair $(u^\star,v^\star)$ is a saddle--point equilibrium of $G(p)$ if and only if there exist Lagrange multipliers $(\lambda_1^\star,\beta_1^\star,\rho_1^\star,r_1^\star)
\text{ and }
(\lambda_2^\star,\beta_2^\star,\rho_2^\star,r_2^\star)$
such that $(v^\star,\lambda_1^\star,\beta_1^\star,\rho_1^\star,r_1^\star)
\text{ and }
(u^\star,\lambda_2^\star,\beta_2^\star,\rho_2^\star,r_2^\star)$
are optimal solutions of the primal--dual pair \eqref{pb: P2}--\eqref{pb: D2}, respectively.
\end{theorem}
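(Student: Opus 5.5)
The plan is to reduce the statement to the deterministic characterization proved for the linearly constrained game in Section~\ref{sec:3}. That result shows a saddle point corresponds to optimal solutions of \eqref{pb: P2}--\eqref{pb: D2}. For this to apply, the chance constraints of $G(p)$ must be written in the form $\operatorname{Re}(Bu)\le b$ and $\operatorname{Re}(Dv)\ge d$ that appear in \eqref{pb: P2}--\eqref{pb: D2}.

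\textbf{Step 1 (convexity and compactness).} I would first use $p_1,p_2\in(0.5,1)$. In every case of Table~\ref{tab: all} this gives $k_{p_1},k_{p_2}>0$. Hence the deterministic equivalents \eqref{eq:P-det}--\eqref{eq:D-det} are second-order cone constraints: a linear term plus a nonnegative multiple of $\|Q\tilde u\|$, respectively $\|P\tilde v\|$. So $S_1(p_1)$ and $S_2(p_2)$ are convex and compact. Together with Assumption~1, Theorem~\ref{thm: existence} then gives a saddle point, and Slater's condition holds for each player's best-response problem.

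\textbf{Step 2 (linearizing the conic constraint at equilibrium).} Fix a saddle point $(u^\star,v^\star)$. Player~1's best response to $v^\star$ maximizes the linear function $\operatorname{Re}(u^HAv^\star)$ over $U\cap\{h_1(u)\le b\}$, where
\[
h_1(u)=\operatorname{Re}(\mu_B u)+k_{p_1}\|Q\tilde u\|
\]
is convex. By Slater and the KKT conditions, $u^\star$ is also optimal over $U\cap\{\operatorname{Re}(\tilde B u)\le \tilde b\}$. Here $\operatorname{Re}(\tilde B u)=\operatorname{Re}(\mu_B u)+k_{p_1}\operatorname{Re}(g^H u)$ and $\tilde b=b$. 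The vector $g$ is a subgradient of $u\mapsto\|Q\tilde u\|$ at $u^\star$. Written with $\hat Q_1,\hat Q_2$, it has the form $\tfrac12(\hat Q_1^H\lambda+\hat Q_2^T\bar\lambda)$ with $\|\lambda\|\le 1$.

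This linearized halfspace contains $S_1(p_1)$, since $\operatorname{Re}(g^Hu)\le\|Q\tilde u\|$, and it is tight at $u^\star$. Therefore the best-response value is unchanged by the replacement. The same construction for player~2 gives a pair $(\tilde D,\tilde d)$.

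\textbf{Step 3 (applying the deterministic characterization).} With $(B,b,D,d)$ replaced by $(\tilde B,\tilde b,\tilde D,\tilde d)$, the game becomes the linearly constrained game of Section~\ref{sec:3}. For the "only if" direction, I would invoke that theorem: strong duality for the SOCPs \eqref{pb: P2} and \eqref{pb: D2} supplies multipliers $(\lambda_i^\star,\beta_i^\star,\rho_i^\star,r_i^\star)$ with equal optimal values.

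For the "if" direction, I would repeat the earlier weak-duality chain. Multiply the equality constraint of \eqref{pb: P2} by $u^H$ and take real parts. Then use $\operatorname{Re}(\tilde Bu)\le \tilde b$ for every $u\in S_1(p_1)$, which holds because $S_1(p_1)$ lies in the halfspace, together with $\operatorname{Re}(\beta^Hu)\le\alpha_1\|\beta\|$ from Theorem~\ref{lem: dual norm}. This bounds $\operatorname{Re}(u^HAv^\star)$ above by the primal value. Symmetrically, $\operatorname{Re}(u^{\star H}Av)$ is bounded below by the dual value. Equality of the two values, evaluated at $(u^\star,v^\star)$, yields the saddle inequalities over $S_1(p_1)\times S_2(p_2)$.

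\textbf{Main obstacle.} The delicate step is Step~2. The linearization depends on the equilibrium itself. I must check that the "if" direction remains valid for arbitrary optimal solutions of \eqref{pb: P2}--\eqref{pb: D2} built from these halfspaces. I must also handle the degenerate case $Q\tilde u^\star=0$. There $g$ is any point of the unit dual ball, and I would choose it through the KKT multiplier. The two ingredients are the containment $S_1(p_1)\subseteq\{\operatorname{Re}(\tilde Bu)\le\tilde b\}$ and its tightness at $u^\star$.
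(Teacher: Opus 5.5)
Your argument holds for the reading you adopt, but it takes a different route from the paper's.

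\textbf{What the paper does.} It does not linearize. Its proof cites \eqref{pb: P2}--\eqref{pb: D2}, but it actually works with ``the SOCP reformulations'', namely \eqref{P3}--\eqref{D3}. There the chance constraint is dualized directly through a conic multiplier $\lambda_1\in\mathbb{C}^n$ with $\|\lambda_1\|\le\delta_1 k_{p_1}$. The forward direction is conic strong duality under Assumption~1. The converse is the same weak-duality chain you use, with Cauchy--Schwarz,
$\operatorname{Re}\bigl(u^H\tfrac12(\hat Q_1^H\lambda_1+\hat Q_2^T\bar\lambda_1)\bigr)=\tilde\lambda_1^TQ\tilde u\le\delta_1k_{p_1}\|Q\tilde u\|$,
doing the job of your halfspace containment.

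\textbf{What your route buys and costs.} You freeze the subgradient direction at the equilibrium and reduce to the theorem of Section~\ref{sec:3}. This makes the literal reference to \eqref{pb: P2}--\eqref{pb: D2} meaningful and reuses an earlier result. The cost is that your problems have data built from the unknown pair $(u^\star,v^\star)$. The characterization is therefore self-referential and cannot be used to compute the equilibrium. By contrast, \eqref{P3}--\eqref{D3} are fixed SOCPs whose conic multiplier ranges over all subgradient directions at once.

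\textbf{Linking the two.} Take a solution of your linearized \eqref{pb: P2} with scalar multiplier $\lambda_1\ge 0$ and direction $\lambda'$, $\|\lambda'\|\le 1$. It maps to the \eqref{P3} point with $\delta_1=\lambda_1$ and conic multiplier $k_{p_1}\lambda_1\lambda'$, with the same objective (reading the $b$-term of \eqref{P3} as $\delta_1 b$). This point is feasible, since $v^\star\in S_2(p_2)$ by tightness. It is optimal, since both values equal the game value. So your forward direction yields the SOCP version with one extra line.

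\textbf{The circularity you flag.} It does not arise in the `if' direction. Every subgradient of the norm $u\mapsto\|Q\tilde u\|$ at $u^\star$ automatically gives both containment and tightness. Hence the KKT-based choice of $g$ is needed only in the `only if' direction, and it only matters when $Q\tilde u^\star=0$. Value equality for your linearized pair, which you assert without justification, follows from the minimax theorem applied to the linearized game. Slater's condition there is inherited from $S_i(p_i)$ through containment.
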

\begin{proof}
If $(u^\star,v^\star)$ is a saddle point of $G(p)$, then for $v=v^\star$ the 
maximization is solved by $u^\star$, and for $u=u^\star$ the 
minimization is solved by $v^\star$.  
Assumption~1 ensures strict feasibility for both subproblems, so by conic 
duality, the SOCP reformulations of these problems admit the dual optimal 
multipliers 
$(\lambda_1^\star,\beta_1^\star,\rho_1^\star,r_1^\star)$ and 
$(\lambda_2^\star,\beta_2^\star,\rho_2^\star,r_2^\star)$.

Conversely, assume that 
$(v^\star,\lambda_1^\star,\beta_1^\star,\rho_1^\star,r_1^\star)$ and 
$(u^\star,\lambda_2^\star,\beta_2^\star,\rho_2^\star,r_2^\star)$ 
are optimal in \eqref{pb: P2} and \eqref{pb: D2}.  
Strong duality implies equality of the optimal values:
\begin{equation}
\label{eq:value-match-final}
\lambda_1^{\star T} b + \alpha_1\|\beta_1^\star\| + \operatorname{Re}(\rho_1^\star)
=
\lambda_2^{\star T} d - \alpha_2\|\beta_2^\star\| - \operatorname{Re}(\rho_2^\star).
\end{equation}
Now consider any $u\in S_1(p_1)$.  
By feasibility of $u$ in \eqref{pb: P2}, the Lagrangian inequality yields
\begin{align}
\operatorname{Re}(u^H A v^\star)\le \lambda_1^{\star T} b + \alpha_1\|\beta_1^\star\| + \operatorname{Re}(\rho_1^\star).\label{eq:left-bound}
\end{align}
Similarly, for every $v\in S_2(p_2)$, feasibility of $v$ in \eqref{pb: D2} gives
\begin{align}
\lambda_2^{\star T} d - \alpha_2\|\beta_2^\star\| - \operatorname{Re}(\rho_2^\star)
\le \operatorname{Re}(u^{\star H} A v).
\label{eq:right-bound}
\end{align}
Combining \eqref{eq:left-bound}--\eqref{eq:right-bound} with 
\eqref{eq:value-match-final}, valid for all feasible $(u,v)$, yields
\[
\operatorname{Re}(u^H A v^\star)\le \operatorname{Re}(u^{\star H} A v),\qquad \forall (u,v)\in S_1(p_1)\times S_2(p_2).
\]
Evaluating this at $(u,v)=(u^\star,v^\star)$ produces equality, which is exactly the saddle-point inequality:
\[
\operatorname{Re}(u^H A v^\star)
\le
\operatorname{Re}(u^{\star H} A v^\star)
\le
\operatorname{Re}(u^{\star H} A v).
\]
Thus $(u^\star,v^\star)$ is a saddle point of $G(p)$.
\end{proof}
\section{Numerical Results}\label{sec:5}
In this section, we evaluate the proposed framework through numerical experiments implemented in Python~3 using the \texttt{CVXPY} library \cite{JMLR:v17:15-408, Agrawal02012018}. All experiments were conducted on a machine equipped with an 11th-generation Intel Core i7--1185G7 processor operating at 3.00~GHz and 32~GB of RAM. We begin by introducing a transmitter--jammer waveform interaction application, which serves to illustrate the behavior of the proposed complex-valued formulation and to assess the performance of the solution method in computing equilibrium strategies under complex-valued payoffs. We then model and solve Problems~\eqref{P3} and~\eqref{D3} under a variety of scenarios and parameter settings, and analyze the sensitivity of the resulting equilibrium strategies to key modeling parameters. Our evaluation focuses on three metrics: the behavior of the game under the 3CP framework across different probability levels in terms of the achieved game value and the associated CPU time, the empirical calibration of the chance constraints as measured by the frequency of constraint violations in Monte Carlo simulations, and the impact of varying the parameter $\alpha$ on the players’ equilibrium strategies under different probability levels.

\subsection{Application: Transmitter--Jammer Waveform Interaction}
We consider a waveform-level zero-sum interaction between a transmitter (Player~1) and a jammer (Player~2). Both players transmit complex baseband sequences of length $N$ and are constrained to constant-modulus signaling in order to isolate phase-dependent interference effects. The transmitter selects a waveform $T_i \in \mathcal{T} \subset \mathbb{C}^N$, while the jammer selects a waveform $J_j \in \mathcal{J} \subset \mathbb{C}^N$, subject to the constant-modulus constraint
\[
|T_{i,n}| = |J_{j,n}| = \frac{1}{\sqrt{N}},
\qquad n=1,\dots,N.
\]
This normalization ensures equal energy across strategies and restricts both players to pure phase modulation. Following the matched-filter model in Section~4.4 of \cite{Lu2018CognitiveRW}, after down-conversion and sampling the received baseband signal is written as
\[
\boldsymbol r = \alpha_s \boldsymbol T_i + \alpha_c \boldsymbol J_j + \boldsymbol v,
\]
where $\alpha_s$ accounts for propagation/backscattering from the true target, $\alpha_c$ captures the jammer-to-receiver complex gain, and $\boldsymbol v \sim \mathcal{CN}(\boldsymbol 0,\sigma_v^2 I_N)$ is circular complex Gaussian noise. The discrete-time matched filter is constructed from the reference waveform $T_i$. Its impulse response at lag $k$ is
\[
\boldsymbol h_i(k) = \big[(T_i)_{1-k},\dots,(T_i)_{N-k}\big]^H,
\qquad k=-N+1,\dots,N-1,
\]
where we adopt the standard zero-padding convention $(T_i)_m=0$ for $m\notin\{1,\dots,N\}$.
The matched-filter output at lag $k$ is then
\[
z_k(i,j) = \boldsymbol r^T \boldsymbol h_i(k)
= \sum_{n=1}^{N} r_n\, (T_i)_{n-k}^*.
\]
To isolate the waveform-level interference mechanism, we focus on the jammer-induced contribution by setting
\[
\boldsymbol r^{(\mathrm{jam})} = \alpha_c \boldsymbol J
\]
(and omitting noise for clarity). Substituting into the matched-filter expression yields
\[
z_k^{(\mathrm{jam})}(i,j)
= \alpha_c \sum_{n=1}^{N} (J_j)_n \, (T_i)_{n-k}^*.
\]
We define the complex payoff as the jammer matched-filter output at zero lag,
\[
A_{ij} \triangleq \frac{1}{\alpha_c}\, z^{(\mathrm{jam})}_0(i,j)
= \sum_{n=1}^{N} (J_j)_n\, \overline{(T_i)_n}
= T_i^H J_j,
\]
where the scaling by $\alpha_c$ is removed for normalization. The payoff $A_{ij}$ is the complex interference induced by the jammer at the matched-filter output when the receiver correlates with $T_i$, with $Re(A_{ij})$ and $Im(A_{ij})$ corresponding to the in-phase and quadrature components under that reference.
\begin{figure}
    \centering
    \includegraphics[width=0.65\linewidth]{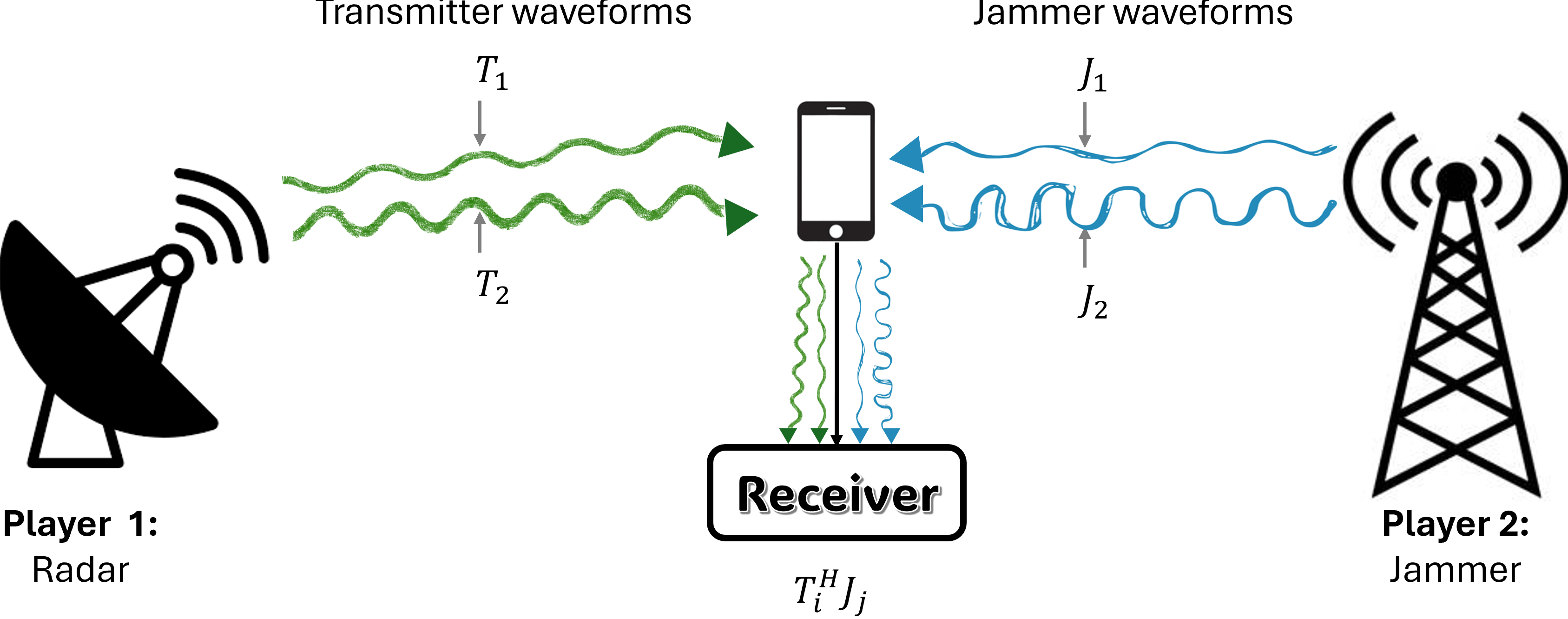}
    \caption{Waveform-level interaction between a transmitter (Player~1) and a jammer (Player~2).}
    \label{fig:tx-jam}
\end{figure}
As illustrated in Fig.~\ref{fig:tx-jam}, the transmitter (Player~1) and the jammer (Player~2) independently select waveforms from their respective strategy sets. The receiver is designed to detect the transmitter signal and therefore constructs a matched filter using the selected transmitter waveform \(T_i\). The jammer waveform \(J_j\) also passes through this matched filter and generates interference at the output. The strength and nature of this interference depend on how closely the jammer waveform aligns with the transmitter waveform. This alignment is quantified by the complex inner product \(T_i^H J_j\). When the two waveforms are well aligned in phase, the jammer produces strong coherent interference at the matched-filter output. When they are poorly aligned, the interference is weaker. Since the waveforms are complex-valued, the interference has both a magnitude and a phase, corresponding respectively to signal amplification and phase distortion effects. We illustrate the model with a concrete instance. Let $N=6$, so that $|T_{i,n}|=|J_{j,n}|=1/\sqrt{6}\approx 0.4082$. Consider two transmitter waveforms $\mathcal{T}=\{T_1,T_2\}$ and two jammer waveforms $\mathcal{J}=\{J_1,J_2\}$ given by
\[
T_1 =
\begin{bmatrix}
0.408 \\
0.204+0.353\,i \\
-0.204+0.353\,i \\
-0.408 \\
-0.204-0.353\,i \\
0.204-0.353\,i
\end{bmatrix},
\quad
T_2 =
\begin{bmatrix}
0.408 \\
0.408\,i \\
-0.408 \\
-0.408\,i \\
0.288+0.288\,i \\
-0.288-0.288\,i
\end{bmatrix},
\]\[
J_1 =
\begin{bmatrix}
0.353+0.204\,i \\
-0.2041+0.353\,i \\
-0.408\,i \\
0.204+0.353\,i \\
-0.204-0.353\,i \\
-0.408
\end{bmatrix},
\quad
J_2 =
\begin{bmatrix}
0.2887-0.2887\,i \\
0.2041+0.3536\,i \\
0.4082\,i \\
-0.4082\,i \\
-0.4082 \\
0.353+0.204\,i
\end{bmatrix}.
\]
Using the definition $A_{ij}=T_i^H J_j$, the resulting complex payoff matrix is
\[
A =
\begin{bmatrix}
0.0833 + 0.0223\,i & 0.5122 - 0.0122\,i \\
0.1012 + 0.2557\,i & 0.1500 - 0.2069\,i
\end{bmatrix}.
\]
Large real parts of $A_{ij}$ correspond to strong coherent interference that directly increases the matched-filter output magnitude, while large imaginary parts correspond to quadrature-phase interference that rotates the detection statistic and degrades coherent integration. The transmitter seeks to select waveforms that minimize both effects, whereas the jammer seeks to maximize them, resulting in a naturally complex-valued zero-sum interaction. To find the best mixed strategies, we solve the primal problem \eqref{pb: P1} and the dual one \eqref{pb: D1}, and we set $\alpha=[0.5,0.5]$, with optimal output: $u^\star=[0.652-0.326i,~ 0.348+0.326i],v^\star=[0.837+0.127i,~0.163-0.127i]$. and optimal $u^{\star H}Av^\star=0.184+0.068i$.

\subsection{Sensitivity Analysis of the Game under 3CP Across Probability Levels.}  
In this section, we investigate how the equilibrium game value produced by the 3CP-based SOCP formulation evolves as the componentwise confidence levels increase. We report results across several problem sizes and distributional assumptions, and we complement these value trends with an empirical calibration study of chance-constraint violations as well as a sensitivity analysis with respect to the norm-bound parameter~$\alpha$.

\subsubsection{Equilibrium game values across sizes, probability levels, and distributional models.}
We consider multiple problem sizes characterized by tuples $(n,l,l_c)$ and $(m,q,q_c)$, selected from $\{(10,5,2),$\\
$(30,15,5),(50,20,7),(100,30,10),(200,50,15)\}$ for Player~1 and Player~2, respectively. Here, $n$ and $m$ denote the number of actions, $l$ and $q$ the total number of linear constraints, and $l_c$ and $q_c$ the number of chance constraints. For each problem size, we set identical componentwise confidence levels $p_1=p_2 \in \{0.60,0.70,0.80,0.95\}$ and evaluate several distributional assumptions, including Gaussian, Laplace, Logistic, Student's $t$-distribution, moment-based, and unknown-moment models. For every instance, we report the optimal game value obtained from Problems~\eqref{pb: P2} and~\eqref{pb: D2}. Table~\eqref{tab:mixed_socp_results} summarizes the resulting game values. Across all problem sizes and probability levels, the equilibrium values exhibit consistent and smooth behavior as $p_1=p_2$ increases. As the problem dimension grows, the sensitivity of the game value to the underlying distribution diminishes, indicating increased robustness of the proposed framework in high-dimensional settings.
\begin{table}[ht]
\caption{Game values under different distributional assumptions and probability levels}
\label{tab:mixed_socp_results}
\centering
\setlength{\tabcolsep}{3pt}
\renewcommand{\arraystretch}{1.0}
\begin{tabular}{|c|c|c| ccccccc|}
\toprule
$(n,l,l_c)$ & $(m,q,q_c)$ & $p_1{=}p_2$ & Cauchy & Gaussian & Laplace & Logistic & $t$-dist & \makecell{Moment\\based} & \makecell{Unknown\\moments} \\
\midrule
(10,5,2)    & (10,5,2)    & 0.60 & -33.76 & -33.57 & -33.49 & -33.98 & -33.59 & -36.33 & -43.02 \\
            &             & 0.70 & -34.89 & -34.33 & -34.29 & -35.24 & -34.38 & -37.20 & -42.67 \\
            &             & 0.80 & -36.78 & -35.22 & -35.43 & -36.81 & -35.33 & -38.42 & -42.13 \\
            &             & 0.95 & -39.55 & -37.52 & -39.09 & -40.37 & -37.97 & -41.36 & -38.64 \\
\midrule
(30,15,5)   & (30,15,5)   & 0.60 &   7.44 &   7.48 &   7.50 &   7.39 &   7.48 &   6.86 &   0.63 \\
            &             & 0.70 &   7.19 &   7.32 &   7.33 &   7.12 &   7.31 &   6.63 &   0.32 \\
            &             & 0.80 &   6.74 &   7.12 &   7.07 &   6.74 &   7.10 &   6.25 &  -0.11 \\
            &             & 0.95 &   0.13 &   6.54 &   5.95 &   4.88 &   6.40 &   2.20 &  -1.44 \\
\midrule
(50,20,7)   & (50,20,7)   & 0.60 &  -0.07 &  -0.09 &  -0.10 &  -0.04 &  -0.09 &   0.17 &  -0.81 \\
            &             & 0.70 &   0.05 &  -0.01 &  -0.01 &   0.08 &  -0.00 &   0.18 &  -0.93 \\
            &             & 0.80 &   0.18 &   0.08 &   0.10 &   0.18 &   0.09 &   0.19 &  -1.10 \\
            &             & 0.95 &  -0.96 &   0.19 &   0.16 &   0.03 &   0.19 &  -0.33 &  -1.90 \\
\midrule
(100,30,10) & (100,30,10) & 0.60 &  -1.80 &  -1.80 &  -1.81 &  -1.79 &  -1.80 &  -1.76 &  -1.81 \\
            &             & 0.70 &  -1.77 &  -1.78 &  -1.79 &  -1.77 &  -1.78 &  -1.76 &  -1.80 \\
            &             & 0.80 &  -1.76 &  -1.77 &  -1.76 &  -1.76 &  -1.77 &  -1.77 &  -1.80 \\
            &             & 0.95 &  -1.79 &  -1.76 &  -1.78 &  -1.79 &  -1.76 &  -1.81 &  -1.70 \\
\midrule
(200,50,15) & (200,50,15) & 0.60 &  -0.60 &  -0.60 &  -0.60 &  -0.59 &  -0.60 &  -0.57 &  -0.49 \\
            &             & 0.70 &  -0.58 &  -0.59 &  -0.59 &  -0.58 &  -0.59 &  -0.56 &  -0.49 \\
            &             & 0.80 &  -0.56 &  -0.58 &  -0.58 &  -0.56 &  -0.58 &  -0.55 &  -0.49 \\
            &             & 0.95 &  -0.49 &  -0.56 &  -0.54 &  -0.53 &  -0.55 &  -0.52 &  -0.47 \\
\bottomrule
\end{tabular}
\end{table}
\subsubsection{Empirical calibration of chance constraints.}  
Now, we consider an instance of size $(n,l_c)=(10,5)$ and $(m,q_c)=(10,5)$, where $n$ and $m$ denote the numbers of actions for Player~1 and Player~2, and $l_c$ and $q_c$ are the numbers of chance constraints (so that all constraints are probabilistic). We first compute the saddle--point strategies $(u^\star,v^\star)$. To empirically assess the realization of the chance constraints, we generate $N=100$ i.i.d.\ scenarios per trial and repeat the simulation over $T=10$ independent trials. A scenario is counted as {violated} for a player if at least one of that player’s $k\in\{l_c,q_c\}$ chance constraints is violated. Figure~\eqref{fig:violated} reports the resulting violation statistics. For the fixed confidence level $p=0.95$, the left panels display the scenario-wise residuals for each constraint (violations occur when the residual is positive; the horizontal line indicates zero), together with the number of violated scenarios in the shown trial. The middle panels summarize the {per-constraint violation ratios} across scenarios, averaged over the $T$ trials, with error bars showing the standard deviation; the dashed red line marks the target level $1-p$. The right panels show the mean and maximum per-constraint violation rates (in \%) as functions of $p$, with the shaded band indicating the standard deviation across trials and the dotted line representing the theoretical target $100(1-p)$. Overall, the empirical violation ratios remain close to the prescribed targets across all tested confidence levels, indicating that the SOCP-based formulation produces well-calibrated chance constraints in practice.
\begin{figure}[ht]
    \centering
    \includegraphics[width=\linewidth]{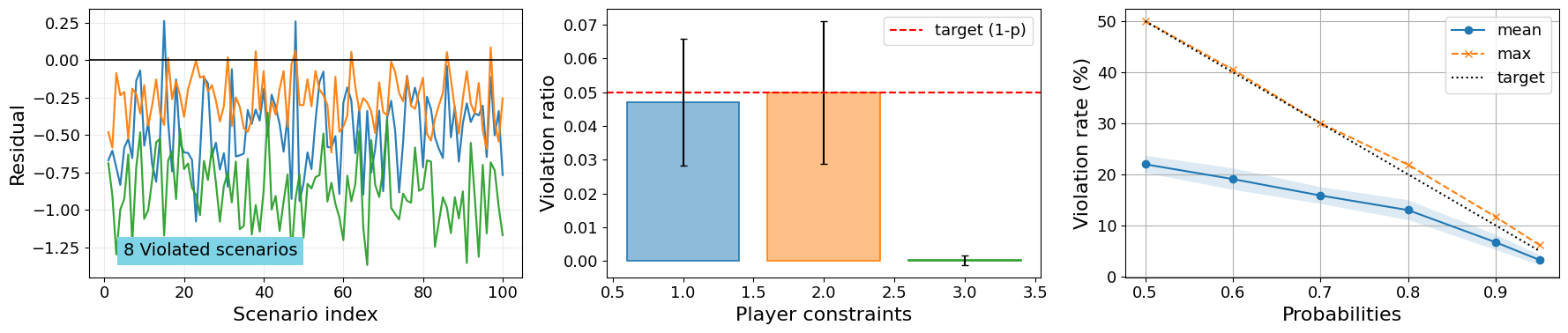}
    \includegraphics[width=\linewidth]{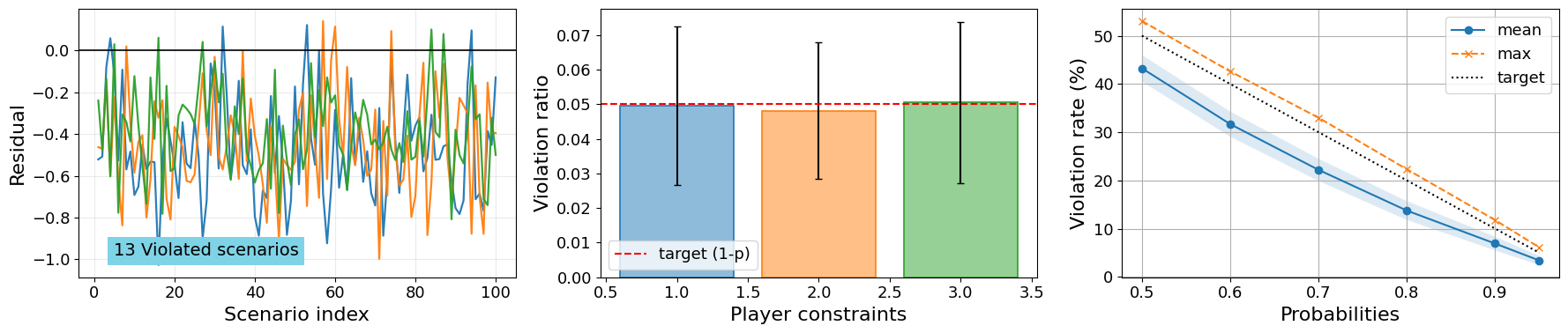}
    \caption{Empirical validation of chance constraints for Player~1 (top row) and Player~2 (bottom row). Left: residuals across scenarios. Middle: per-constraint violation ratios with variability across Monte Carlo trials, compared to the target $1-p$. Right: mean and maximum violation rates versus $p$, shown against the theoretical target.}
    \label{fig:violated}
\end{figure}
\subsubsection{Sensitivity Analysis with Different Values of \texorpdfstring{$\alpha$}{alpha}}
To further investigate how the equilibrium strategies depend on the norm-bound parameter $\alpha$, we examine the Euclidean norms $\|\operatorname{Re}(u^\star)\|$ and $\|\operatorname{Re}(v^\star)\|$ as $\alpha$ varies. Figure~\eqref{fig:alpha_per_p} illustrates the effect of different confidence levels $p$ at a fixed problem size $(n,l_c)=(10,5),(m,q_c)=(10,5)$, while Figure~\eqref{fig:alpha_per_size} compares multiple problem sizes at a fixed confidence level $p=0.9$. In both cases, the equilibrium norms remain bounded and exhibit consistent scaling. From the figures, we observe that the equilibrium norms $|\operatorname{Re}(u^\star)|$ and $|\operatorname{Re}(v^\star)|$ increase with the norm bound parameter $\alpha$ until reaching a saturation point, beyond which further increases in $\alpha$ have little effect. The saturation level depends on the confidence level $p$, with tighter chance constraints (larger $p$) leading to slightly smaller equilibrium norms and lower game values.
\begin{figure}[ht]
    \centering
    \includegraphics[width=1\linewidth]{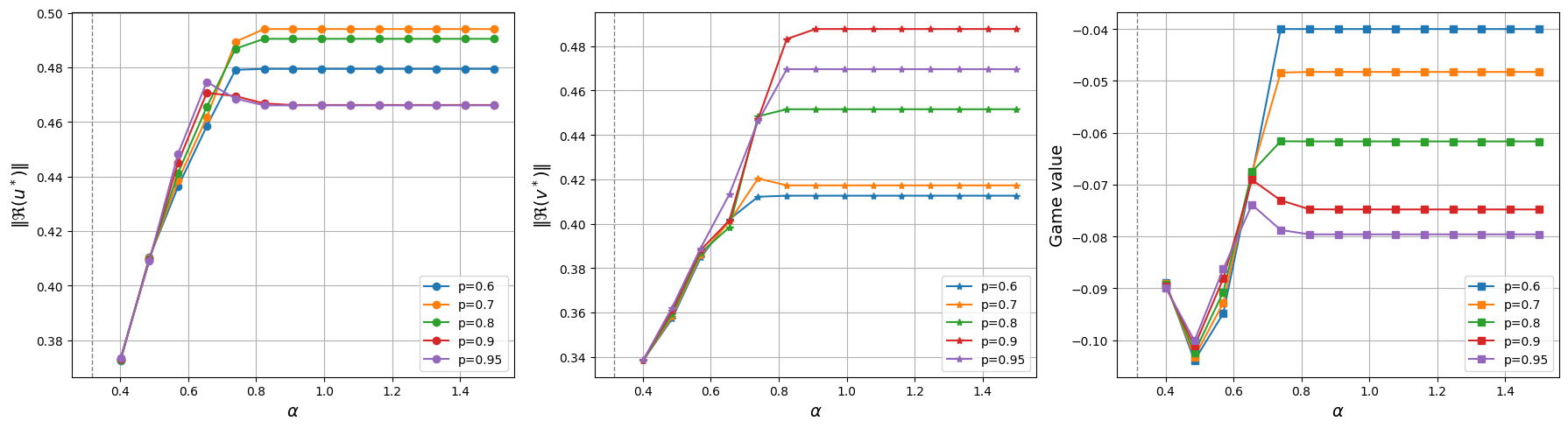}
    \caption{Euclidean norms of the real parts of the equilibrium strategies $\|\operatorname{Re}(u^\star)\|$ (left), $\|\operatorname{Re}(v^\star)\|$ (middle), and game value (right) as functions of the norm bound parameter $\alpha$, for different confidence levels $p$ at fixed size $(n,l_c)=(10,5),(m,q_c)=(10,5)$.}
    \label{fig:alpha_per_p}
\end{figure}
\begin{figure}[ht]
    \centering
    \includegraphics[width=1\linewidth]{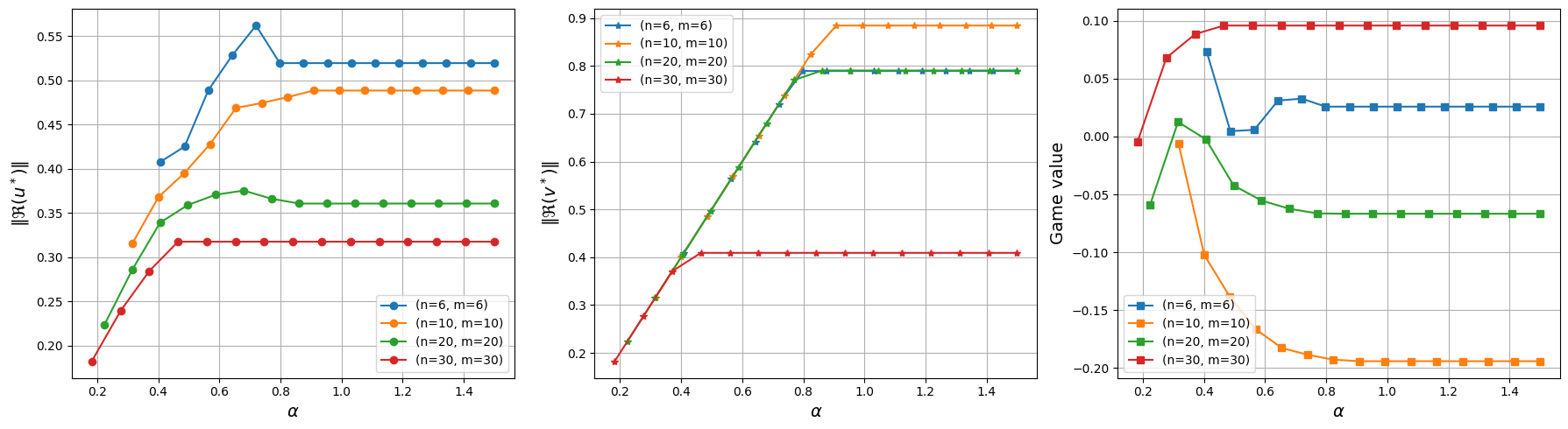}
    \caption{Euclidean norms of the real parts of the equilibrium strategies  $\|\operatorname{Re}(u^\star)\|$ (left) and $\|\operatorname{Re}(v^\star)\|$ (middle), and game value (right) as functions of the norm bound parameter $\alpha$, for different problem sizes at fixed confidence level $p=0.9$.}
    \label{fig:alpha_per_size}
\end{figure}
\section{Conclusion} \label{sec:6}
In this paper, we extend classical zero-sum games to the complex domain by allowing strategies with both probabilistic and phase components. The real part of each strategy encodes probabilities over actions, while the imaginary part, bounded by norm constraints, provides additional flexibility to capture interference or correlation effects. By incorporating chance constraints, we explicitly address uncertainty in the environment and show that the feasible sets remain convex, ensuring that equilibrium computation is tractable. A primal-dual formulation based on duality of the norm yields an effective solution method, supported by both theoretical guarantees and numerical evidence.  The proposed framework is not restricted to two-player interactions. The construction of mixed strategies extends naturally to $n$-player games with general utility functions, allowing applications well beyond the zero-sum setting. In addition, the framework accommodates not only linear but also nonlinear convex constraints without loss of tractability. This adaptability makes the approach suitable for a broad class of optimization-based game formulations, including those that combine probabilistic constraints with structural regularizers such as sparsity or conic restrictions. Our experiments confirm both the theory and the practical reliability of the proposed game under the 3CP framework. First, for different sizes and probability levels, the game values and equilibrium norms stay bounded, showing that saddle–point solutions remain stable even when chance constraints become tighter. Second, the calibration study shows that violation rates match the theoretical targets $(1-p)$ closely, with maximum violations staying within control, which supports the accuracy of the SOCP reformulation in practice. Finally, the analysis with respect to $\alpha$ shows that this parameter only affects the equilibrium for small values, after which both strategies and game values level off, with the final plateau mainly determined by the probability level and the problem size.

\bibliographystyle{plainnat}
\bibliography{bib}

\end{document}